\documentclass[11pt]{article}

\usepackage[letterpaper,margin=0.8in]{geometry}
\usepackage[utf8]{inputenc}
\usepackage[T1]{fontenc}
\usepackage[english]{babel}

\usepackage{silence}
\usepackage[babel]{microtype}
\usepackage[dvipsnames]{xcolor}

\usepackage{csquotes}

\usepackage{amsmath,amssymb,amsthm}%,amsrefs}

\usepackage{mathrsfs}

\usepackage{mathtools}
\usepackage{commath}

\usepackage{thmtools}
\usepackage{thm-restate}

\usepackage{bbm}
\usepackage{dsfont}
\usepackage{lmodern}
\usepackage{fixcmex}

\usepackage{cases}
\usepackage{enumitem}
\setlist[enumerate,1]{label*=\arabic*., leftmargin=2em}

\usepackage{centernot}
\usepackage{booktabs}
\usepackage{multirow}
\usepackage{comment}
\usepackage{float}
\usepackage{scalerel}

\usepackage[pdfborderstyle={/S/U/W 0},unicode]{hyperref}
\hypersetup{
    colorlinks=true,
    linkcolor=blue!85!black,
    citecolor=blue!85!black,
    urlcolor=blue!85!black,
}

\usepackage[capitalise]{cleveref}

\usepackage{etoolbox}

\usepackage{subfiles}
\usepackage{pdfpages}

\ifdefined\checkunusedreferences
\usepackage{refcheck}

\makeatletter
\newcommand{\alsocheck}[1]{%
  \expandafter\let\csname @@\string#1\endcsname#1%
  \expandafter\DeclareRobustCommand\csname relax\string#1\endcsname[1]{%
    \csname @@\string#1\endcsname{##1}\@for\@temp:=##1\do{\wrtusdrf{\@temp}\wrtusdrf{{\@temp}}}}%
  \expandafter\let\expandafter#1\csname relax\string#1\endcsname
}
\newcommand{\alsocheckrange}[1]{%
  \expandafter\let\csname @@\string#1\endcsname#1%
  \expandafter\DeclareRobustCommand\csname relax\string#1\endcsname[2]{%
    \csname @@\string#1\endcsname{##1}{##2}\wrtusdrf{##1}\wrtusdrf{{##1}}\wrtusdrf{##2}\wrtusdrf{{##2}}}%
  \expandafter\let\expandafter#1\csname relax\string#1\endcsname
}
\makeatother

\alsocheck{\cref}
\alsocheck{\Cref}
\alsocheckrange{\crefrange}
\alsocheckrange{\Crefrange}

\else
\fi

\declaretheoremstyle[
  headfont=\normalfont\bfseries,
  bodyfont=\itshape,
  spaceabove=6pt,
  spacebelow=6pt,
  postheadspace=0.5em,
]{plain}

\declaretheoremstyle[
  headfont=\normalfont\bfseries,
  bodyfont=\normalfont,
  spaceabove=6pt,
  spacebelow=6pt,
  postheadspace=0.5em,
]{noital}

\declaretheorem[style=plain,name=Theorem,numberwithin=section]{theorem}

\declaretheorem[style=plain,sibling=theorem,name=Lemma]{lemma}

\declaretheorem[style=plain,sibling=theorem,name=Corollary]{corollary}

\declaretheorem[style=plain,numbered=no,name=Theorem]{theoremn-n}
\declaretheorem[style=plain,numbered=no,name=Theorems]{theorems-n}
\declaretheorem[style=plain,numbered=no,name=Proposition]{proposition-n}
\declaretheorem[style=plain,numbered=no,name=Propositions]{propositions-n}
\declaretheorem[style=plain,numbered=no,name=Lemma]{lemma-n}
\declaretheorem[style=plain,numbered=no,name=Lemmas]{lemmas-n}
\declaretheorem[style=plain,numbered=no,name=Corollary]{corollary-n}
\declaretheorem[style=plain,numbered=no,name=Corollaries]{corollaries-n}
\declaretheorem[style=plain,numbered=no,name=Conjecture]{conjecture-n}
\declaretheorem[style=plain,numbered=no,name=Conjectures]{conjectures-n}
\declaretheorem[style=plain,numbered=no,name=Question]{question-n}
\declaretheorem[style=plain,numbered=no,name=Questions]{questions-n}
\declaretheorem[style=plain,numbered=no,name=Claim]{claim-n}
\declaretheorem[style=plain,numbered=no,name=Claims]{claims-n}
\declaretheorem[style=plain,numbered=no,name=Fact]{fact-n}
\declaretheorem[style=plain,numbered=no,name=Facts]{facts-n}
\declaretheorem[style=plain,numbered=no,name=Problem]{problem-n}
\declaretheorem[style=plain,numbered=no,name=Problems]{problems-n}
\declaretheorem[style=plain,numbered=no,name=Open Problem]{openproblem-n}
\declaretheorem[style=plain,numbered=no,name=Open Problems]{openproblems-n}
\declaretheorem[style=plain,numbered=no,name=Challenge]{challenge-n}
\declaretheorem[style=plain,numbered=no,name=Challenges]{challenges-n}
\declaretheorem[style=plain,numbered=no,name=Exercise]{exercise-n}
\declaretheorem[style=plain,numbered=no,name=Exercises]{exercises-n}
\declaretheorem[style=plain,numbered=no,name=Property]{property-n}
\declaretheorem[style=plain,numbered=no,name=Properties]{properties-n}

\declaretheorem[style=noital,sibling=theorem,name=Remark]{remark}

\declaretheorem[style=noital,sibling=theorem,name=Definition]{definition}

\declaretheorem[style=noital,numbered=no,name=Remark]{remark-n}
\declaretheorem[style=noital,numbered=no,name=Remarks]{remarks-n}
\declaretheorem[style=noital,numbered=no,name=Definition]{definition-n}
\declaretheorem[style=noital,numbered=no,name=Definitions]{definitions-n}
\declaretheorem[style=noital,numbered=no,name=Construction]{construction-n}
\declaretheorem[style=noital,numbered=no,name=Constructions]{constructions-n}
\declaretheorem[style=noital,numbered=no,name=Observation]{observation-n}
\declaretheorem[style=noital,numbered=no,name=Observations]{observations-n}
\declaretheorem[style=noital,numbered=no,name=Example]{example-n}
\declaretheorem[style=noital,numbered=no,name=Examples]{examples-n}

\newcommand{\defined}{\mathrel{\coloneqq}}
\renewcommand{\leq}{\leqslant}

\renewcommand{\geq}{\geqslant}

\let\oldexists\exists
\let\exists\relax
\DeclareMathOperator{\exists}{\:\!\oldexists}
\let\oldforall\forall
\let\forall\relax
\DeclareMathOperator{\forall}{\:\!\oldforall}

\newcommand{\st}{\mathbin{\colon}}
\undef{\set}
\DeclarePairedDelimiter{\set}{\lbrace}{\rbrace}

\undef{\emptyset}
\newcommand{\emptyset}{\varnothing}

\undef{\mod}
\newcommand{\mod}[1]{\ (\mathrm{mod}\ #1)}

\undef{\abs}
\DeclarePairedDelimiterX{\abs}[1]
  {\lvert}{\rvert}{\ifblank{#1}{\,\cdot\,}{#1}}
\undef{\norm}
\DeclarePairedDelimiterX{\norm}[1]
  {\lVert}{\rVert}{\ifblank{#1}{\,\cdot\,}{#1}}
\DeclarePairedDelimiterX{\inner}[2]
  {\langle}{\rangle}{\ifblank{#1}{\,\cdot\,}{#1},\ifblank{#2}{\,\cdot\,}{#2}}

\DeclareMathOperator{\supp}{supp}

\DeclareMathDelimiter{\given}
  {\mathbin}{symbols}{"6A}{largesymbols}{"0C}
\DeclareMathOperator{\Prob}{\mathbb{P}}
\DeclarePairedDelimiterXPP{\prob}[1]
  {\Prob}{\lparen}{\rparen}{}
  {\renewcommand{\given}{\nonscript\;\delimsize\vert\nonscript\;\mathopen{}}#1}
\DeclareMathOperator{\Expec}{\mathbb{E}}
\DeclarePairedDelimiterXPP{\expec}[1]
  {\Expec}{\lparen}{\rparen}{}
  {\renewcommand{\given}{\nonscript\;\delimsize\vert\nonscript\;\mathopen{}}#1}
\DeclareMathOperator{\Var}{Var}
\DeclarePairedDelimiterXPP{\var}[1]
  {\Var}{\lparen}{\rparen}{}
  {\renewcommand{\given}{\nonscript\;\delimsize\vert\nonscript\;\mathopen{}}#1}
\DeclareMathOperator{\Cov}{Cov}
\DeclarePairedDelimiterXPP{\cov}[2]
  {\Cov}{\lparen}{\rparen}{}{#1,#2}

\newcommand{\NN}{\mathbb{N}}

\newcommand{\RR}{\mathbb{R}}

\newcommand{\cB}{\mathcal{B}}
\newcommand{\cC}{\mathcal{C}}
\newcommand{\cD}{\mathcal{D}}

\newcommand{\cF}{\mathcal{F}}
\newcommand{\cG}{\mathcal{G}}

\newcommand{\cI}{\mathcal{I}}

\newcommand{\cP}{\mathcal{P}}

\newcommand{\cT}{\mathcal{T}}

\newcommand{\cV}{\mathcal{V}}
\newcommand{\cW}{\mathcal{W}}

\newcommand{\rM}{\mathrm{M}}

\usepackage{tikz}
\usetikzlibrary{arrows, arrows.meta, backgrounds}
\usepackage{pgfplots}
\pgfplotsset{compat=1.18}

\usepackage{cancel}

\usepackage[
  backend=biber,
  style=alphabetic,
  maxnames=99,
  giveninits=true,
  doi=false,
  isbn=false,
  url=false
]{biblatex}
\NewDocumentCommand{\keq}{g g}{%
  \IfNoValueTF{#1}
    {\overset{k}{\sim}}
    {%
      \IfNoValueTF{#2}
        {\overset{#1}{\sim}}
        {\overset{#1,#2}{\sim}}%
    }%
}

\newcommand{\lp}[1]%{}
{\textcolor{red}{\textbf{[LP:} #1\textbf{]}}}
\newcommand{\lpoptional}[1]%{}
{\textcolor{red}{\textbf{[LP(Optional):} #1\textbf{]}}}

\title{Deck Reconstruction for Restricted Word Classes}

\author{%
  Krishna Angal\thanks{Department of Mathematics, University of Kentucky,
  Lexington, KY 40506. Email: \href{mailto:aean231@uky.edu}{aean231@uky.edu}.}%
  \hspace{2.1em}%
  Ruben Ascoli\thanks{School of Mathematics, Georgia Institute of Technology,
  Atlanta, GA 30332. Email: \href{mailto:rascoli3@gatech.edu}{rascoli3@gatech.edu}.}%
  \hspace{2.1em}%
  Simon X. He\thanks{Department of Mathematics, University of California,
  Berkeley, Berkeley, CA 94720. Email: \href{mailto:simonhe0727@berkeley.edu}{simonhe0727@berkeley.edu}.}%
  \hspace{2.1em}%
  Xiaoyu He\thanks{School of Mathematics, Georgia Institute of Technology,
  Atlanta, GA 30332. Email: \href{mailto:xhe399@gatech.edu}{xhe399@gatech.edu}.}%
  \\[0.65em]
  Logan Post\thanks{School of Mathematics, Georgia Institute of Technology,
  Atlanta, GA 30332. Email: \href{mailto:lpost3@gatech.edu}{lpost3@gatech.edu}.}%
  \hspace{2.6em}%
  Arker Oke Soe\thanks{Department of Mathematics, Cornell University,
  Ithaca, NY, USA. Email: \href{mailto:as3738@cornell.edu}{as3738@cornell.edu}.}%
  \hspace{2.6em}%
  Olga Zhukova\thanks{Department of Mathematics, Cornell University,
  Ithaca, NY, USA. Email: \href{mailto:ovz3@cornell.edu}{ovz3@cornell.edu}.}%
}

\date{July 2026}

\begin{document}

\renewcommand{\thefootnote}{\fnsymbol{footnote}}
\maketitle

\begin{abstract}
For a binary word $x\in \{0,1\}^+$, the $k$-deck of $x$ is the multiset of all length-$k$ subsequences counted with multiplicity. The deck reconstruction problem asks for the minimum $k$ such that every word of a fixed length is uniquely determined by its $k$-deck. In this paper, we study deck reconstruction for several restricted classes of binary words, including words with exactly $s$ ones, words with at most $\ell$ runs of ones, and words where the indices of ones are given by a monotone polynomial of degree $d$. We prove that for these classes of words, their reconstruction thresholds are bounded by a function of the structural complexity of the class.

We also study weak deck reconstruction, in which subwords may contain ``wildcard'' symbols that map to either 0 or 1. By generalizing a “template lemma” of Dudík and Schulman, we improve the upper bound in an open question about weak reconstruction from Dudík and Schulman. Finally, for the class of periodic words with period $d$, we prove that its in-class reconstruction threshold is equal to the general reconstruction threshold for words of length $d$, and give a general template construction producing further classes with the same threshold-preservation phenomenon.
\end{abstract}

\setcounter{footnote}{0}
\renewcommand{\thefootnote}{\arabic{footnote}}

%----------------------------------------------------------%
\begin{section}{Introduction}

For a fixed alphabet $\Lambda$, let $x \in \Lambda^n$ be a word of length $n$. For $k\leq n$, a word $w\in \Lambda^k$ is a \emph{subword} of $x$ if there is a strictly increasing map $s: [k] \to [n]$ such that $w_i = x_{s(i)}$ for all $i \in [k]$. The \emph{multiplicity} of $w$ in $x$, denoted $N(w, x)$, is the number of distinct such maps $s$. The \emph{$k$-deck} of $x$, denoted $\cD_k(x)$, is the multiset of all length-$k$ subwords of $x$ written with multiplicity; that is, $\cD_k(x) = \{(w, N(w, x)): w\in \Lambda^k\}$. For two words $x, y$, we say that $x \keq y$ if $\cD_k(x) = \cD_k(y)$. The deck reconstruction problem, first posed by Kalashnik \cite{Kalashnik1973} and further studied by Manvel et al. \cite{Manvel1991-mh}, asks for the smallest value of $k$ for which any $x \in \Lambda^n$ is uniquely determined by $\cD_k(x)$. More precisely, the deck reconstruction problem concerns the asymptotic behavior of the so-called ``reconstruction threshold''
\[r(n) \defined \min \set{k \st \text{no distinct } x,y \in \Lambda^n \text{ satisfy } x \keq y }.\]

As shown in \cite[Lemma 2]{Manvel1991-mh}, $r(n)$ is independent of the alphabet as long as it has size at least $2$. Thus, for the remainder of the paper we consider only binary words, so $\Lambda=\{0,1\}$. In \cite{Manvel1991-mh}, they give the first nontrivial bounds of $\log_{2}n \leq r(n) \leq \lfloor n/2 \rfloor$. The upper bound was later improved to $O(\sqrt{n\log n})$ by Scott \cite{Scott1997-mh}, and the current best known bounds are 
\[e^{\Omega (\sqrt{\log n})} \leq r(n) \leq O(\sqrt{n}),\]
where the upper bound was given by Krasikov and Roditty \cite{Krasikov1997-rp}, and the lower bound was given by Dud\'ik and Schulman \cite{Dudik2003-fz}.

The study of deck reconstruction is related to the Reconstruction Conjecture for graphs, first posed by P. J. Kelly and S. M. Ulam in 1941 \cite{Kelly1942}, which asserts that every finite simple graph on at least three vertices is reconstructible from its vertex-deleted subgraphs. This conjecture has been studied for a variety of graph classes, including disconnected graphs, trees, and regular graphs \cite{Kelly1957}. Other reconstruction problems such as trace reconstruction \cite{Batu2004-pq, Chase2020-if} and permutation reconstruction \cite{Raykova2006-jl, Ma2024-sk}, along with variants of the deck reconstruction problem like gapped deck reconstruction \cite{Golm2022-jy}, reconstruction of multiple strings simultaneously \cite{Yehezkeally2023-jm}, and reconstruction given long and short subwords \cite{Gabrys2017-ju} have also been studied.

Motivated by the work on the Reconstruction Conjecture for particular graph classes, we aim to study the deck reconstruction problem for specific classes of words. We show that restricted to specific classes of words, the reconstruction threshold is no longer a function of the word's length, but rather of the structural complexity of the class.

Let $\set{0,1}^+$ denote the set of all non-empty finite-length binary strings. Then, for a given class of words $\cC \subseteq \set{0,1}^+$, we define the \textit{reconstruction threshold}
\[r(\cC) \defined \min\set{k \st \text{no } x \in \cC \text{ and }y \in \{0,1\}^+ \text{ with } x \neq y \text{ satisfy }x \keq y}.\]
For some classes of words, we also study the \textit{in-class reconstruction threshold} 
\[r'(\cC) \defined \min\set{k \st \text{no distinct } x,y \in \cC \text{ satisfy } x \keq y }. \] 
Trivially, we have $r'(\cC) \leq r(\cC)$: the $r(\cC)$-deck suffices to distinguish any word in $\cC$ from \emph{any} other binary word, while the $r'(\cC)$-deck only promises to distinguish any word in $\cC$ from any other word in $\cC$.

We next introduce the specific word classes we study and the results obtained for each class. Let $[n] \coloneqq \{1, 2, \dots, n\}$. Let $\supp(x) \defined \{i\in [n] : x_i = 1\}$ be the set of indices where $x$ has a $1$ entry.

For
\[
\cW_s \defined \{x\in\{0,1\}^+:|\supp(x)|=s\},
\]
which is the class of words with exactly $s$ ones, we obtain the following.

%--------------------------------%
\begin{restatable}{theorem}{thmfixedweight}
\label{thm:fixedweight}
For $s \geq 5$, we have 
\[r(\cW_s) \leq s.\]
\end{restatable}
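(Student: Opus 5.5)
The plan is to show that the $s$-deck of a word $x\in\cW_s$ pins down $x$ among all binary words. Write $x = 0^{a_0}10^{a_1}1\cdots 10^{a_s}$, so $x$ is encoded by the gap vector $(a_0,\dots,a_s)$. Let $y$ be any binary word with $y \keq x$ for $k=s$.

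\emph{Step 1: $y$ has the same length and the same number of ones.} The $k$-deck determines all $j$-decks for $j\leq k$, since each $j$-subword of a length-$k$ subword is counted $\binom{n-j}{k-j}$ times. Hence $y$ has the same length $n$ as $x$. From the $1$-deck, $y$ has exactly $s$ ones, so $y\in\cW_s$ with gap vector $(b_0,\dots,b_s)$ and $\sum b_i=\sum a_i=n-s$.

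\emph{Step 2: recover power sums of the zeros in each gap.} For a subword with exactly $s-1$ ones and one zero, placed after the $j$-th one, the multiplicity in $x$ is the number of ways to choose the $s-1$ ones (all but one of the $s$) and a zero lying in the correct position. More useful are subwords with all $s$ ones and $t=k-s$ zeros; but since $k=s$, every length-$s$ subword has at most $s$ ones. The relevant counts are therefore those of $1^{i}0^{m}1^{s-i-m}$-type patterns, or more generally words with $s-m$ ones and $m$ zeros. The count of $w=0^{c_0}10^{c_1}\cdots$ with $s-m$ ones is a polynomial in the gaps $a_0,\dots,a_s$. The goal is to show that the family of these polynomials for $m$ up to $s$ (with total degree at most $s$) separates nonnegative integer vectors with fixed sum. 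The natural route is to use patterns with a single block of zeros, $1^{i}0^{m}1^{s-m-i}$. Their counts are sums over choices of omitted ones of $\binom{\text{zeros in the corresponding interval}}{m}$, and these give the elementary/power-sum information of partial sums $A_j=a_0+\dots+a_j$ in a triangular way. I would set up the induction as follows: knowing $(a_0,\dots,a_{j-1})$, isolate $a_j$ using the subword $0^{m}1^{s-m}$ together with appropriately shifted ones. Starting from $0^m1^{s-m}$ with $m$ small, this counts sums of $\binom{A_{j}}{m}$ over the position of the $(m+\ldots)$ ones, which is an approach in the style of Manvel et al.\ for binary words with few ones.

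\emph{Step 3: the main obstacle.} The difficulty is that with only $k=s$ we cannot include all $s$ ones plus zeros, so every count involves a sum over which ones are deleted. This yields symmetric-type sums rather than individual gap values. I expect to need to show that the resulting system is injective, via a Vandermonde/moment argument. The idea is to view the multiset of points $\{(j,A_j)\}$ and show that the counts of $0^{c}1^{s-m}0^{m-c}$ determine enough mixed moments $\sum_j \binom{j}{\cdot}\binom{A_j}{\cdot}$ (degree $\le s$) to recover the $s$ values $A_1<\dots\le A_s$. Equivalently, the counts determine the distribution of $\binom{s}{\cdot}$-weighted power sums up to degree about $s$, and $s$ distinct power sums determine $s$ numbers via Newton's identities. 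I would first try to extract $p_r=\sum_j A_j^r$ for $r=1,\dots,s$ exactly, using the patterns $1^{s-r}0^r$ with weights removed by inclusion–exclusion over the known lower-degree data. The hypothesis $s\ge5$ presumably enters where small cases allow a degree deficiency (one power sum missing) to be compensated, or where counterexamples exist for small $s$. I would check the boundary degree-$s$ term carefully, expecting that it is the only place the argument can fail, and handle it by using the fixed length $n$ as the extra constraint.
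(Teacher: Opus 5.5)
Your Step 1 and the encoding by gaps and partial sums $A_j=a_0+\dots+a_j$ are fine. The gap is in Step 3: the argument stops where the real work begins, and the remedy you suggest cannot work.

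Count degrees. A pattern with $m$ zeros has a count that is a polynomial of degree at most $m$ in the gaps. The only length-$s$ pattern with $s$ zeros is $0^s$, whose count $\binom{n-s}{s}$ says nothing about the gaps. The single-one patterns $0^c10^d$ with $c+d\leq s-1$ give exactly $\sum_{j=0}^{s-1}f(A_j)$ for $\deg f\leq s-1$. These are the power sums $p_1,\dots,p_{s-1}$ of the $s$ unknowns $A_0,\dots,A_{s-1}$. By Newton's identities they fix $e_1,\dots,e_{s-1}$ and leave $e_s=\prod_j A_j$ (equivalently $p_s$) undetermined.

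So your plan to extract $p_r$ for $r=1,\dots,s$ from $1^{s-r}0^r$ fails exactly at $r=s$, where the pattern is $0^s$. The fixed length does not supply the missing equation. It only gives $A_s=n-s$, which is already built into every count.

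The paper's own example shows that no argument of this shape can close the gap. Take $10^5110^51\overset{4}{\sim}01010^61010$ with $s=4$. Both words have length $14$, and their partial sums $(0,5,5,10)$ and $(1,2,8,9)$ have equal $p_1,p_2,p_3$ but $p_4=11250\neq 10674$. Any valid proof must therefore use $s\geq 5$ at the step that removes this last degree of freedom. It must also exploit order-sensitive counts, namely patterns with several ones, whose counts weight $\binom{A_j}{m}$ by polynomials in $j$. Your plan only gestures at these.

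For comparison, the paper first uses the linear counts $N(1^r01^{s-1-r},x)=(s-r)g_r+(r+1)g_{r+1}$. These show that the gaps of $y$ are $g_i'=g_i+(-1)^i\binom{s}{i}\delta_0$, i.e. $A_j'=A_j+(-1)^j\binom{s-1}{j}\delta_0$. This is the same one-parameter family you run into; the $s=4$ example is the case $\delta_0=1$.

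It then eliminates $\delta_0$ explicitly:
\begin{itemize}
\item $N(001^{s-2},\cdot)=\sum_j\binom{s-1-j}{s-3}\binom{A_j}{2}$, supported on $j\leq 2$, gives $g_2=g_1-\frac{s(s+1)}{4}\delta_0$.
\item $N(001^{s-3},\cdot)=\sum_j\binom{s-1-j}{s-4}\binom{A_j}{2}$, supported on $j\leq 3$, gives $g_3=g_1+\frac{s(s+1)(s-4)}{12}\delta_0$.
\item $N(0001^{s-3},\cdot)$ then yields $s(s-1)(s-2)(s-3)(s-4)(s+1)(s^3-9s^2+8s+6)\,\delta_0^3=0$.
\end{itemize}
The factor $s-4$ is where $s=4$ escapes. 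Since the cubic has no integer root $\geq 5$, this forces $\delta_0=0$. A concrete nonlinear elimination of this kind is the missing idea in your proposal.
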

%--------------------------------%

 Define the class
\[ 
\cP_d \defined \set{x\in \cW_s : s\geq 1,\ \text{$\supp(x)$ is the image of a degree $d$ polynomial monotonic on $[s]$ applied to $[s]$}},
\]
for which we obtain the following bound.

%--------------------------------%
\begin{restatable}{theorem}{thmpolysupport}
\label{thm:polynomialsupport}
For $d \geq 0$, we have 
\[r(\cP_d) \leq d+2.\]
\end{restatable}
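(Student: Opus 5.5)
Throughout, let $x\in\cP_d$ have length $n$ and ones at positions $p(1)<\dots<p(s)$, where $p$ is a polynomial of degree $d$ that is monotone on $[s]$. Let $y\in\{0,1\}^+$ satisfy $x\keq y$ with $k=d+2$. The plan is to encode each word by its ``zero-count'' sequence and then read off from the $k$-deck enough mixed moments of that sequence to pin it down by a sum-of-squares argument.

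\emph{Step 1: known parameters.} By the standard fact that the $k$-deck determines the $j$-deck for all $j\le k$, the $1$-deck is determined. Hence $y$ has the same length $n$ and the same number $s$ of ones as $x$. For a word $w$ with ones at positions $q_1<\dots<q_s$, write $z_i(w)\defined q_i-i$ for the number of zeros preceding its $i$-th one. This sequence is nondecreasing, and together with $n$ it determines $w$. For $x$ we have $z_i(x)=g(i)$, where $g(i)\defined p(i)-i$ is a polynomial of degree at most $d$.

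\emph{Step 2: moments from the deck.} Fix $a,b\ge 0$ with $a+b\le k-1=d+1$. Sum $N(u1,w)$ over all words $u$ with exactly $a$ ones and $b$ zeros; these are words of length $a+b+1\le k$, so the sum is determined by the deck. Group the embeddings of $u1$ into $w$ according to which one of $w$ the final $1$ is sent to, say the $i$-th one. The image of $u$ is then an arbitrary choice of $a$ of the $i-1$ earlier ones together with $b$ of the $z_i$ earlier zeros; summing over all arrangements $u$ removes the ordering constraint. This gives
\[
\sum_{u}N(u1,w)=\sum_{i=1}^s\binom{i-1}{a}\binom{z_i(w)}{b}.
\]
Taking linear combinations, the $k$-deck determines $\sum_i i^a z_i(w)^b$ for all $a+b\le d+1$. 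These quantities therefore agree for $w=x$ and $w=y$.

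\emph{Step 3: sum of squares.} Assume first $d\ge1$, and expand
\[
\sum_{i=1}^s\bigl(z_i(y)-g(i)\bigr)^2=\sum_i z_i(y)^2-2\sum_i g(i)z_i(y)+\sum_i g(i)^2 .
\]
The first sum uses $(a,b)=(0,2)$, which is allowed since $2\le d+1$. The middle sum uses $(a,1)$ with $a\le d$, which is allowed since $a+1\le d+1$. The last sum does not involve $y$ at all. So this quantity is unchanged when $y$ is replaced by $x$, where it equals $0$. Hence $z_i(y)=g(i)=z_i(x)$ for every $i$, and with $n$ known this gives $y=x$.

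For $d=0$, a constant polynomial is injective on $[s]$ only when $s=1$. In that case $x$ has a single one, and the $2$-deck gives $N(01,y)$, which is the position of that one minus one, so again $y=x$.

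The main obstacle is Step 2: one has to notice that summing over all arrangements $u$ turns the otherwise messy interleaving count into a product of binomial coefficients. Once these mixed moments are available, Step 3 is immediate.
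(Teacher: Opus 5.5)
Your proposal is correct and is essentially the paper's argument: expand $\sum_i(\text{coordinate of } y-\text{polynomial for } x)^2$ and show that each term is determined by the deck through the mixed moments $\sum_i i^a(\cdot)^b$ with $a+b\le d+1$, using $b=2$ for the square term and $b=1$, $a\le d$ for the cross term. The differences are cosmetic: you work with zero counts $z_i=q_i-i$ instead of positions, and you get the moments from the direct count $\sum_u N(u1,w)=\sum_i\binom{i-1}{a}\binom{z_i}{b}$, whereas the paper applies \Cref{lem:momentinterpretation} to the moments $\rM_{(a,0,\dots,0)}$.
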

%--------------------------------%
Note that $\cW_s\subseteq \cP_{s-1}$. Next, we study
\[
\cB_\ell \defined \set{ x : x \text{ is a binary word with at most } \ell \text{ runs of 1s}},
\]
and obtain the following result.

%--------------------------------%
\begin{restatable}{theorem}{thmbddruns}
\label{thm:bdd-runs}
For $\ell \geq 1$, we have
\[
\exp\left(\Omega(\sqrt{\log 2\ell})\right) \leq r'(\cB_\ell) \leq 2\ell,
\]
and
\[
r'(\cB_\ell)\leq r(\cB_\ell) \leq 2\ell + 1.
\]
\end{restatable}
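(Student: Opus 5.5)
The lower bound comes from a known construction. The two upper bounds reduce to one root-counting argument: equal decks force a high-order root at $z=1$, and Descartes' rule of signs limits the order of that root.

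\textbf{Lower bound.} Dudík and Schulman \cite{Dudik2003-fz} give, for every $n$, distinct words $x,y\in\{0,1\}^n$ with $x\keq y$ for some $k=e^{\Omega(\sqrt{\log n})}$. Any word of length $n$ has at most $\lceil n/2\rceil$ runs of ones. Take $\ell=\lceil n/2\rceil$, so that $n\geq 2\ell-1$. Then $x,y\in\cB_\ell$, hence $r'(\cB_\ell)>k$, and this is $\exp(\Omega(\sqrt{\log 2\ell}))$. For general $\ell$, I would pick $n$ with $\lceil n/2\rceil\leq \ell$ as large as possible. This works because the classes $\cB_\ell$ are nested, so $r'(\cB_\ell)$ is nondecreasing in $\ell$.

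\textbf{Step 1: moments from the deck.} Suppose $x\keq y$ for some $k$. The $k$-deck determines the length $n$, since the total multiplicity is $\binom{n}{k}$. It also determines every $j$-deck with $j\leq k$. For $0\leq j\leq k-1$, summing $N(w1,x)$ over all $w\in\{0,1\}^j$ gives
\[
\sum_{i\in\supp(x)}\binom{i-1}{j}.
\]
Hence the $k$-deck determines $\sum_{i\in\supp(x)} i^j$ for $0\leq j\leq k-1$. Now set
\[
P(z)=\sum_{i\in\supp(x)}z^i-\sum_{i\in\supp(y)}z^i .
\]
If $x\neq y$ have the same length and the same $k$-deck, then $P\neq 0$. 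Moreover $(z\tfrac{d}{dz})^jP(1)=0$ for all $j<k$, so $z=1$ is a root of $P$ of multiplicity at least $k$.

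\textbf{Step 2: sign changes.} Descartes' rule of signs bounds the number of positive roots of $P$, counted with multiplicity, by the number of sign changes in its coefficient sequence $f=\mathbb 1_{\supp x}-\mathbb 1_{\supp y}$. Group the nonzero entries of $f$ into maximal blocks of constant sign, ignoring zeros.

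The key observation is that two distinct $+$ blocks cannot lie in the same run of ones of $x$. Between them there is a $-$ entry, that is, a position in $\supp(y)\setminus\supp(x)$. Such a position cannot lie inside a run of $x$. Hence the number of $+$ blocks is at most the number of runs of $x$, and symmetrically the number of $-$ blocks is at most the number of runs of $y$.

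\emph{In-class case.} If $x,y\in\cB_\ell$, there are at most $2\ell$ blocks, hence at most $2\ell-1$ sign changes. Therefore $k\leq 2\ell-1$, and so $r'(\cB_\ell)\leq 2\ell$.

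\emph{General case.} If only $x\in\cB_\ell$, there are still at most $\ell$ blocks of sign $+$. Since blocks alternate in sign, there are at most $\ell+1$ blocks of sign $-$. That gives at most $2\ell+1$ blocks and at most $2\ell$ sign changes, so $k\leq 2\ell$. Thus the $(2\ell+1)$-deck distinguishes $x$ from every other word, giving $r(\cB_\ell)\leq 2\ell+1$.

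\textbf{Main obstacle.} The step needing the most care is the block-counting argument: checking that zeros of $f$ and runs of $y$ cannot break a single run of $x$ into several $+$ blocks. The deck-to-moment translation and the application of Descartes' rule are routine.
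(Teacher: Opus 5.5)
Your argument is correct, but its core goes by a different route from the paper.

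\emph{The key lemma.} The paper isolates the following statement: if $\delta\in\{-1,0,1\}^n$ has $s$ signed runs and $\sum_i i^a\delta_i=0$ for all $a\le s-1$, then $\delta=0$. It proves this by writing down an explicit polynomial $F$ of degree at most $s-1$, with roots placed between consecutive runs of opposite sign. This $F$ satisfies $\delta_iF(i)>0$ wherever $\delta_i\neq 0$, so $\sum_i\delta_iF(i)$ cannot vanish. You instead encode $f$ in the generating polynomial $P(z)$, turn the vanishing moments into a root of order at least $k$ at $z=1$, and cite Descartes' rule of signs. The two are dual forms of the same fact: a nonzero sequence annihilated by all polynomials of degree $<k$ has at least $k$ sign changes. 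The paper's version is self-contained; yours is shorter given the classical theorem.

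\emph{Counting runs.} The paper bounds the number of signed runs (which are also broken by zeros) via the total-variation inequality $2S(\delta)\le\sum_i|x_{i+1}-x_i|+\sum_i|y_{i+1}-y_i|\le 4\ell$. You instead inject same-sign blocks (zeros ignored) into runs of the corresponding word.

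\emph{The bound $r(\cB_\ell)\le 2\ell+1$.} This is the most substantive difference. The paper first uses the count $N(1(01)^\ell,\cdot)$ of a length-$(2\ell+1)$ subword to certify $y\in\cB_\ell$, then falls back on the in-class bound. Your alternation remark (at most $\ell$ positive blocks forces at most $\ell+1$ negative ones) handles arbitrary $y$ directly, with no membership test.

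Your Step 1 derives only the single-index moments actually needed, rather than invoking the general moment lemma. Your lower bound is the same as the paper's: all words of length $2\ell$ lie in $\cB_\ell$.
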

%--------------------------------%

We also provide a more general way to obtain bounds on the reconstruction thresholds for certain word classes by studying ``templates'', which were previously studied by \cite{Manvel1991-mh, Dudik2003-fz}. Specifically, we generalize a ``template lemma'' presented in \cite[Lemma 9]{Dudik2003-fz} to provide a better bound for a weaker form of word reconstruction. 

A \textit{weak subword} of $x\in \{0,1\}^n$ is a word $w\in \{0,1,\star\}^k$ together with a strictly increasing map $s:[k] \to [n]$ such that 
\[
x_{s(i)} =
\begin{cases}
0 \quad \text{ if } w_i = 0, \\
1 \quad \text{ if } w_i = 1, \\
0 \text{ or } 1 \quad \text{ if } w_i = \star.
\end{cases}
\]
We call $\star$ a ``wildcard'' letter since it can map to any letter of $x$. Then, the $(k, t)$-deck of $x$, denoted $\cD_{k, t}(x)$, is the multiset of all subwords of $x$ with length $k$ and $t$ \textbf{non}-wildcards (hence $k-t$ wildcards). For example, the $(2, 1)$-deck of $0111$ is 
\[\{(\star1, 6), (1\star, 3), (\star0, 0), (0\star, 3)\}.\]

For words $x, y$, we say $x \keq{k}{t} y$ if $\cD_{k, t}(x) = \cD_{k, t}(y)$, and we define the \textit{weak reconstruction threshold}
\[
r(n, t) \defined \min \set{k \st 
\text{no distinct } x,y \in \{0, 1\}^n \text{ satisfy } x \keq{k}{t} y }.
\]
We also let
\[
S(k, t) 
:= 
\min\{n \st \text{there exist distinct } x, y \in \{0, 1\}^n \text{ such that } x \keq y\} = \min\{n:r(n,t)>k\}.
\]
Clearly bounds on $S(k,t)$ translate directly to bounds on $r(n,t)$, but $S(k,t)$ is easier to work with in the proof of the following theorem.
Our generalized template lemma (\Cref{lem:template-lem}) gives the following improvement, which also answers parts of open question 2 in \cite{Dudik2003-fz}.

%--------------------------------%
\begin{restatable}{theorem}{thmweakdeckupper}\label{thm:weak-deck-upper}
    For $k \to \infty$ and for any $0 \leq t \leq k$, 
    \[
    S(k,t) \leq k^{\frac{3}{\log 3} \log (t+1) + O( \log \log k )}.
    \]
\end{restatable}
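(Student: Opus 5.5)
The plan is to follow the Dudík–Schulman strategy for the lower bound $r(n)\ge e^{\Omega(\sqrt{\log n})}$: build two distinct words with equal $(k,t)$-decks from a pair of ``templates'' and a recursive substitution. Their construction multiplies length by a polynomial in $k$ at each of roughly $\log k$ levels, giving $n=k^{O(\log k)}$. In the weak setting only $t$ letters of a subword are non-wildcards, so only $t$-fold correlations need to be matched. The number of recursion levels should then depend on $t$ rather than $k$, and the count should come out as $\log_3(t+1)$ levels, each costing a factor $k^{3+o(1)}$. That is exactly the exponent $\frac{3}{\log 3}\log(t+1)$.

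\textbf{Step 1 (template lemma).} First I would state and use the generalized template lemma, \Cref{lem:template-lem}. I expect it to say roughly the following. Suppose $u\keq{k}{t}v$ are words of length $m$. Substitute blocks for their letters: replace each $1$ by a block $a$ and each $0$ by a block $b$, where $a\keq{k}{t'}b$ have common length $L$. Then the two resulting words agree on $(k,t'')$-decks with $t''$ strictly larger than both $t$ and $t'$, plausibly $t''+1=(t+1)+(t'+1)$ or better. The mechanism is this. A weak subword with $t''$ non-wildcards, embedded in the substituted word, distributes its non-wildcards among blocks. Either some block receives more than $t'$ of them, or few blocks are touched, and then the outer equivalence $u\keq{k}{t}v$ applies. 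Summing over how the $k$ positions are distributed among blocks, with wildcards weighted by binomial counts $\binom{L}{j}$ that are identical for $a$ and $b$, gives a polynomial identity in block-level counts. I would prove it by expanding $N(w,\cdot)$ as a sum over compositions of $w$ into consecutive pieces assigned to increasing blocks.

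\textbf{Step 2 (recursion).} Start from a base pair at $t=0$ or $t=1$. For $t=0$, any two words of equal length agree, so $n=1$ or $2$ suffices. For $t=1$, words with the same length and the same number of ones at the right position moments suffice, with length polynomial in $k$. Then apply Step 1 iteratively with the pair substituted into itself, in a tripling scheme. Each application should raise $t+1$ by a factor of about $3$ while multiplying the length by $k^{3+O(\log\log k/\log k)}$. The $3$ exponent comes from needing blocks that also match moments up to degree $k$ under shifting, for instance padding by runs of length $\mathrm{poly}(k)$. After $\lceil\log_3(t+1)\rceil$ levels the length is $k^{3\log_3(t+1)+O(\log\log k)}$, which is the claimed bound. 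Since $S(k,t)$ is monotone in $t$, rounding $t$ up to the next value reached by the scheme costs only an additive constant in the exponent, and that constant is absorbed into the $O(\log\log k)$ term.

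\textbf{Main obstacle.} The hard part is getting the gain factor of $3$ per level at a cost of only $k^{3+o(1)}$ in length. In Dudík–Schulman the template inequality gives only additive progress, and the improvement here hinges on the refined lemma. I would first try to prove that if $a\keq{k}{t}b$ and the outer template matches $(k,t)$-decks, then a three-way substitution such as $aab\,/\,aba\,/\,baa$-type patterns, or the Thue–Morse-like pair $a b b a$ versus $b a a b$, yields agreement up to $3t+2$ non-wildcards. I would verify this by showing that any distribution of $3t+2$ non-wildcards over three blocks forces some block to carry at most $t$ of them, where the swap of $a$ and $b$ is invisible. The $\log\log k$ error should come from choosing the padding lengths via a prime or power-of-two near $k$ and from the base case.
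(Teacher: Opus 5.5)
Your numerology matches the paper: each level multiplies $t+1$ by $3$ at a length cost of about $k^{3}\log k$, over $\lceil\log_3(t+1)\rceil$ levels. But the mechanism you propose for the key step does not work, and the ingredient that actually produces the factor $k^3$ is missing.

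\textbf{Why short templates fail.} Your pigeonhole observation is the right local fact: a weak subword with at most $3(a+1)-1$ non-wildcards can put more than $a$ of them into at most two blocks. It does not, however, make a short template work. Expand $N(w,p(x,y))$ over partitions of $w$ into consecutive pieces sent to increasing blow-up blocks. Each light piece (at most $a$ non-wildcards, length at most $k$) has the same count in $x$ and in $y$, so it acts as a wildcard at the template level. Each heavy piece fixes a template letter, and the heavy pieces can sit anywhere among up to $k$ occupied blocks. What the two templates must share is therefore $N(v,p)=N(v,q)$ for every pattern $v$ of length up to $k$ with at most two non-wildcards, i.e.\ $p\keq{k}{2}q$. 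Fixed templates such as $abba/baab$ or $aab/aba$ are nowhere near this. Your claimed implication already fails at your own base case $t=0$. Take $a=0$, $b=1$. Then $N(\star\star1,0110)=1\neq 3=N(\star\star1,1001)$, and $N(\star1,001)=2\neq 1=N(\star1,010)$, so even $(3,1)$-equivalence fails. Your attributions are also off: the $k^3$ does not come from padding, and the $O(\log\log k)$ does not come from primes near $k$.

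\textbf{What is needed.} Two ingredients:
\begin{enumerate}
\item[(i)] The multiplicative template lemma: if $x\keq{k}{a}y$ and $p\keq{k}{b}q$, then $p(x,y)\keq{k}{T}q(x,y)$ for every $T\le (a+1)(b+1)-1$. It is proved by the partition expansion above; since the total length is at most $k$, no piece exceeds length $k$, so heaviness depends only on the number of non-wildcards. This gives $S(k,(a+1)(b+1)-1)\le S(k,a)\,S(k,b)$.
\item[(ii)] An outer pair with $b=2$ of controlled length. This is exactly the Dud\'ik--Schulman bound $S(k,t)\le O_t(k^{t+1}\log k)$ at $t=2$: a $(k,2)$-equivalent pair of length $O(k^3\log k)$.
\end{enumerate}
Keep this outer pair fixed and iterate on the inner pair with $T_i=\min\{t,3^i-1\}$. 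This gives $S(k,t)\le S(k,2)^{\lceil\log_3(t+1)\rceil}\le (Ck^3\log k)^{\lceil\log_3(t+1)\rceil}$. The factor $(\log k)^{\lceil\log_3(t+1)\rceil}$ with $t\le k$, together with the ceiling, is what produces the $O(\log\log k)$ term.

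Note also that literally substituting a pair into itself, as your Step 2 suggests, squares $t+1$. Rounding $t$ up to the next value of a squaring scheme can double the exponent; the fixed $b=2$ outer pair is what keeps the rounding loss additive. The choice $b=2$ also explains the constant: an outer $(k,b)$-pair of length about $k^{b+1}$ yields exponent $\frac{b+1}{\log(b+1)}\log(t+1)$, which over integers is minimized at $b=2$.
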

%--------------------------------%
Note that this result generalizes the bound obtained by Dud\'ik and Schulman \cite{Dudik2003-fz}, who proved it for $t=k$. Roughly speaking, we use a similar main idea from their proof while noticing that their condition on the substituted pairs for the template can be weakened to provide more flexibility.

We use similar machinery to get a conditional template that can be used to generate other classes of words that preserve the reconstruction thresholds for in-class reconstruction.

For a \textit{template} word $p \in \{0, 1\}^+$ and \textit{substitution} words $x, y \in \{0, 1\}^+$, define $p(x, y) \in \{0, 1\}^+$ as the word obtained by replacing each $0$ in $p$ by $x$ and each $1$ by $y$. 

%--------------------------------%
\begin{restatable}{theorem}{thmtemplateequivalence}\label{thm:template-equivalence}
Let $p, q, x, y \in \{0, 1\}^+$. For $k \geq 1$, it holds that
\begin{enumerate}
    \item[(1)] If $\frac{N(0,x)}{N(1, x)} \neq \frac{N(0,y)}{N(1,y)}$, then
    \[
    p \keq q \iff p(x, y) \keq q(x, y).
    \]

    \item[(2)] If $|p| = |q|$ and $N(1,p) \neq N(1,q)$, then
    \[
    x \keq y \iff p(x, y) \keq q(x, y).
    \]
\end{enumerate}
\end{restatable}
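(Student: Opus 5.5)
The whole argument rests on one counting identity for subword multiplicities in a substituted word, after which both parts reduce to linear algebra on the top-order terms.

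\emph{Decomposition formula.} Write $p=p_1\cdots p_r$ and regard $p(x,y)$ as a concatenation of $r$ blocks, where block $j$ is $x$ if $p_j=0$ and $y$ if $p_j=1$. An embedding of $w\in\{0,1\}^k$ into $p(x,y)$ touches some $m\le k$ blocks. It therefore corresponds uniquely to three pieces of data: a factorization $w=w_1w_2\cdots w_m$ into nonempty factors, a length-$m$ subword $u$ of $p$ recording which blocks are used, and an embedding of each $w_i$ into the block indexed by $u_i$. Writing $B_0=x$ and $B_1=y$, this gives
\[
N(w,p(x,y))=\sum_{m=1}^{k}\ \sum_{w=w_1\cdots w_m}\ \sum_{u\in\{0,1\}^m} N(u,p)\prod_{i=1}^m N(w_i,B_{u_i}).
\]
I will also use the standard fact that, for words of length at least $j$, the $k$-deck determines the $j$-deck for every $j\le k$. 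This holds because $\binom{n-j}{k-j}N(v,z)=\sum_{|w|=k}N(v,w)N(w,z)$ when $|v|=j$. Degenerate cases where some word is shorter than $k$ are handled separately using lengths.

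\emph{Forward directions.} Suppose $p\keq q$. Then all $m$-decks of $p$ and $q$ agree for $m\le k$, so the formula gives $p(x,y)\keq q(x,y)$ directly. Now suppose instead that $x\keq y$. Then $N(w_i,x)=N(w_i,y)$ for every $|w_i|\le k$, so the product in the formula does not depend on $u$. The inner sum then collapses to $\binom{|p|}{m}$ times a quantity independent of $p$. Since $|p|=|q|$, this gives $p(x,y)\keq q(x,y)$.

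\emph{Converse of (1).} I induct on $k$, assuming $p(x,y)\keq q(x,y)$; this also gives equality of the $j$-decks for all $j<k$, hence $p\keq{k-1}... $ by induction, i.e.\ $p$ and $q$ have equal $j$-decks for all $j\le k-1$. Consequently all terms of the formula with $m<k$ coincide for $p$ and $q$. What remains is the $m=k$ term, where every $w_i$ is a single letter:
\[
\sum_{u\in\{0,1\}^k}N(u,p)\prod_i M_{u_i,w_i},\qquad M=\begin{pmatrix}N(0,x)&N(1,x)\\ N(0,y)&N(1,y)\end{pmatrix}.
\]
This term equals $\bigl((M^{\otimes k})^{T}\vec N_p\bigr)_w$, where $\vec N_p=(N(u,p))_{u}$. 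The hypothesis on the ratios says exactly that $\det M\ne0$, so $M^{\otimes k}$ is invertible. Equality for all $w$ then forces $\vec N_p=\vec N_q$, i.e.\ $p\keq q$. The base case $k=1$ is the same computation.

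\emph{Converse of (2).} I prove the contrapositive: if $x\not\keq y$, then $p(x,y)\not\keq q(x,y)$. Take the least $k$ at which the decks of $x$ and $y$ differ. If $k$ is smaller than the given one, the differing lower deck of $p(x,y)$ and $q(x,y)$ already forces their $k$-decks to differ, so it suffices to treat $k$ itself. Here $x$ and $y$ have equal $j$-decks for $j<k$. In the formula, every term with $m\ge2$ uses factors of length $<k$, so it is independent of $u$ and contributes equally for $p$ and $q$, again because $|p|=|q|$. Only the $m=1$ term survives in the difference, and it equals
\[
(N(0,p)-N(0,q))N(w,x)+(N(1,p)-N(1,q))N(w,y)=(N(1,p)-N(1,q))\bigl(N(w,y)-N(w,x)\bigr).
\]
The first factor is nonzero by hypothesis, and the second is nonzero for some $w$ by the choice of $k$. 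So the $k$-decks of $p(x,y)$ and $q(x,y)$ differ.

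\emph{Main obstacle.} The step I expect to need most care is bookkeeping rather than ideas. First, the reduction from $k$-decks to $j$-decks requires the words to have length at least $k$; the short-word edge cases must be dispatched using $|x|=|y|$, which follows from the $1$-decks. Second, the invertibility of $M^{\otimes k}$ in part (1) must be set up correctly as a Kronecker power acting on the vector indexed by $u\in\{0,1\}^k$.
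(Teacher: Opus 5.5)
Your proposal is correct and is essentially the paper's proof. It uses the same block-decomposition formula for $N(w,p(x,y))$ and the same top-order term governed by the Kronecker power $C_{xy}^{\otimes l}$ for the converse of (1) (your $M$ is the transpose of the paper's $C_{xy}$). For (2) it uses the same collapse of the multi-block terms to $\binom{|p|}{m}$ times a constant, and your least-differing-deck contrapositive is just the paper's induction on $|w|$ rephrased.

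One caveat, shared with the paper: both arguments tacitly read $\overset{k}{\sim}$ as equality of all $j$-decks for $j\le k$. Your claim that the short-word cases are ``handled using lengths'' cannot be carried out, because the forward directions are false there as literally stated. For example, take $p=0$, $q=1$, $x=01$, $y=1$, $k=2$: then $p\overset{2}{\sim}q$ vacuously, yet $N(01,p(x,y))=1\neq 0=N(01,q(x,y))$. So that convention should simply be stated explicitly.
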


%--------------------------------%

As an example, we can apply this result to the class
\[
\cT_d \defined \set{ x : x \text{ is a periodic binary word with period } d},
\]
 to obtain the following.

%--------------------------------%
\begin{restatable}{corollary}{corperiodic}\label{cor:periodic}
For $d \geq 1$, we have
\[
r'(\cT_d) = r(d).
\]
\end{restatable}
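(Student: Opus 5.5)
We would derive \Cref{cor:periodic} from part (1) of \Cref{thm:template-equivalence}. First we fix the meaning of $\cT_d$: a word of $\cT_d$ has the form $u^m = uu\cdots u$ ($m\ge 1$ copies) for some $u\in\{0,1\}^d$. Hence two words of $\cT_d$ can only be compared by their $k$-decks when they have the same length, which here means the same number of periods. The key observation is that $u^m$ is a template substitution. Take $p = u$ and substitution words $x = 0^m$, $y = 1^m$; then $u(0^m,1^m)$ replaces each letter by a block of $m$ copies of itself, and that is not $u^m$. So we use the template the other way around: take the template $p = 0^m$ (or $1^m$) with substitution word $u$. That gives $x \keq y \iff p(x,y)\keq q(x,y)$, which is the form of part (2). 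But part (2) requires $N(1,p)\neq N(1,q)$, so we use $p=0^m$, $q=1^m$. Then $p(u,v) = u^m$ and $q(u,v)=v^m$, with $|p|=|q|=m$ and $N(1,p)=0\ne m=N(1,q)$.

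Part (2) therefore gives, for all $u,v\in\{0,1\}^d$ and all $k\ge1$,
\[ u\keq v \iff u^m \keq v^m. \]
From this we prove the two inequalities.

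For $r'(\cT_d)\le r(d)$, take $k=r(d)$ and distinct $u^m, v^m\in\cT_d$ of equal length. Distinct words force $u\ne v$, so $u\not\keq v$ by the definition of $r(d)$, and hence $u^m\not\keq v^m$.

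For $r'(\cT_d)\ge r(d)$, take $k=r(d)-1$. By the definition of $r(d)$ there are distinct $u,v\in\{0,1\}^d$ with $u\keq v$. Then $u^m\keq v^m$ for any $m$; with $m=1$ these are already distinct words of $\cT_d$. So $k$ fails, and $r'(\cT_d)\ge r(d)$.

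The main obstacle is a definitional subtlety. If \enquote{period $d$} allows a partial final period (words $u^m u'$ with $u'$ a prefix of $u$), these are not directly of the form $p(u,v)$. Before invoking the theorem we would need either to restrict to the reading where the length is a multiple of $d$, or to argue separately: equal-length words share the same $u'$-length, and one could try writing $u=u'w$ and using the template with three-letter-type substitutions. We would adopt the full-period reading, consistent with the template framework. We also note that the case $m=1$ with $d<r(d)$ issues is vacuous, since $r(d)$ is defined on words of length $d$.
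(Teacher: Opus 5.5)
Your proof is correct and is essentially the paper's: the paper also applies part (2) of \cref{thm:template-equivalence} with $p=0^R$, $q=1^R$ to get $u\keq v \iff u^R\keq v^R$ for all $u,v\in\{0,1\}^d$, and reads off $r'(\cT_d)=r(d)$ under the same full-period reading $x=z^R$. Your opening reference to part (1) is a slip, since everything you actually use is part (2); your explicit two-inequality check (with $R=1$ giving the lower bound) simply spells out the paper's one-line conclusion.
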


%--------------------------------%

The remainder of the paper is organized as follows. In \Cref{sec:moment-interpretation}, we give an alternative characterization of $(k, t)$-equivalence that was implicit in \cite{Dudik2003-fz}. In  \Cref{subsec:fixedweight}, \Cref{subsec:polynomialsupport}, and \Cref{subsec:boundedruns}, we prove Theorems \ref{thm:fixedweight}, \ref{thm:polynomialsupport}, and \ref{thm:bdd-runs}, respectively. In \Cref{sec:weak-deck-bound}, we prove Theorem \ref{thm:weak-deck-upper} using the generalized template lemma. Finally, in \Cref{sec:template-classes}, we prove Theorem \ref{thm:template-equivalence} through a counting and induction argument and show Corollary \ref{cor:periodic}.
\end{section}
%----------------------------------------------------------%

%----------------------------------------------------------%
\begin{section}{Moment Interpretation of Weak Deck Equivalence}\label{sec:moment-interpretation}

We first present a characterization of $(k,t)$-equivalence which was implicitly noted by Dud\'ik and Schulman \cite{Dudik2003-fz}.

%--------------------------------%
\begin{lemma}[Moment interpretation]
\label{lem:momentinterpretation} Let $x,y$ be length $n$ binary words. Then, $x \keq{k}{t} y$ if and only if for all $\ell \leq t,$ and $a_1 + a_2 + \dots + a_\ell \leq k -\ell$, we have 
\[\sum_{1 \leq i_1 < i_2 < \dots < i_\ell \leq n} i_1^{a_1} i_2^{a_2} \dots i_\ell^{a_\ell} x_{i_1} x_{i_2} \dots x_{i_\ell} = \sum_{1 \leq i_1 < i_2 < \dots < i_\ell \leq n} i_1^{a_1} i_2^{a_2} \dots i_\ell^{a_\ell} y_{i_1} y_{i_2} \dots y_{i_\ell}. \]
\end{lemma}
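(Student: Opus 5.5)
We translate weak-deck counts into sums of binomial coefficients in the positions of the ones, and then pass between the binomial basis and the monomial basis by an invertible triangular change of basis.

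\textbf{Step 1: weak-deck counts as sums.} Fix a weak subword $w\in\{0,1,\star\}^k$ with $t$ non-wildcards. Note that $N(w,x)$ depends only on $x$, not on any word we compare it with. We may first reduce to patterns whose non-wildcard letters are all $1$. Indeed, a $0$ in $w$ can be written as $\star$ minus $1$ (inclusion–exclusion), so $N(w,x)$ is a signed sum of counts $N(w',x)$, where $w'$ ranges over patterns whose non-wildcards are $1$'s placed in a subset of the original non-wildcard positions. Conversely, each such $w'$ with at most $t$ ones can be padded back to $t$ non-wildcards of total length $k$. So the $(k,t)$-deck and the collection of counts of these all-ones patterns determine each other.

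Now take an all-ones pattern with $\ell\le t$ ones at positions $p_1<\dots<p_\ell$ of a length-$k$ pattern. Write $g_0,\dots,g_\ell$ for the numbers of wildcards before, between and after the ones, so $\sum_j g_j=k-\ell$. Then
\[
N(w,x)=\sum_{i_1<\dots<i_\ell} x_{i_1}\cdots x_{i_\ell}\binom{i_1-1}{g_0}\binom{i_2-i_1-1}{g_1}\cdots\binom{n-i_\ell}{g_\ell}.
\]
The weight multiplying $x_{i_1}\cdots x_{i_\ell}$ is a polynomial in $(i_1,\dots,i_\ell)$ of total degree $k-\ell$, with coefficients depending on $n$. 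Since $x$ and $y$ have the same length $n$, this dependence is harmless.

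\textbf{Step 2: both directions via spanning.} Let $V_\ell$ be the space of polynomials in $i_1,\dots,i_\ell$ of total degree at most $k-\ell$. For $P\in V_\ell$, define $M_P(x)=\sum_{i_1<\dots<i_\ell} P(i)\,x_{i_1}\cdots x_{i_\ell}$. This is linear in $P$.

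\emph{($\Leftarrow$)} If all monomial moments agree for $\ell\le t$, then $M_P(x)=M_P(y)$ for every $P\in V_\ell$. In particular this holds for the binomial products of Step 1, so the all-ones counts agree. By Step 1, $x\keq{k}{t}y$.

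\emph{($\Rightarrow$)} We must show that the products $\binom{i_1-1}{g_0}\binom{i_2-i_1-1}{g_1}\cdots\binom{n-i_\ell}{g_\ell}$, with $\sum g_j=k-\ell$, span $V_\ell$. It suffices to span the smaller family with $g_\ell=0$ and $\sum_{j<\ell} g_j = m$, for every $m\le k-\ell$. A pattern with fewer wildcards can be padded by placing extra wildcards at the end, using $\binom{n-i_\ell}{g}$. But that factor has degree $g$ in $i_\ell$ and so spoils the clean triangular structure.

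To avoid this, use the identity $\sum_{g}\binom{a}{g}\binom{b}{c-g}=\binom{a+b}{c}$. Summing over how the final wildcards split gives the following: the count of a pattern with $m$ leading/internal wildcards and the last one followed by $\star^{k-\ell-m}$ already encodes lower-degree information. Alternatively, and more simply, handle degree at most $k-\ell$ directly. The products $\prod_{j<\ell}\binom{i_{j+1}-i_j-1}{g_j}$ (with $i_0=0$), together with $\binom{n-i_\ell}{g_\ell}$, have leading forms $\prod_j (i_{j+1}-i_j)^{g_j}/g_j!$ in the variables $d_j=i_{j+1}-i_j$ ($d_\ell=n-i_\ell$). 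These are linear coordinates, affine-invertible in $i$ since $n$ is fixed. Monomials in $d_0,\dots,d_\ell$ of total degree exactly $k-\ell$ span all polynomials in $i$ of degree at most $k-\ell$. This is because $d_0+\dots+d_\ell=n$ is a nonzero constant, so multiplying by powers of $(d_0+\dots+d_\ell)/n$ homogenizes lower-degree terms.

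Then induct on degree. Each binomial product equals its monomial leading term plus lower-degree terms. Those lower-degree terms are, by the induction hypothesis, combinations of products of lower total degree, and these in turn are homogenized via the constant sum. This shows that the span equals $V_\ell$, so equal weak decks force equal moments.

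\textbf{Main obstacle.} The main obstacle is this spanning and homogenization argument: carefully showing that fixing the total wildcard count to exactly $k-\ell$ still generates all polynomials of degree at most $k-\ell$. The key fact is $\sum_j d_j = n-\ell$ plus a constant (the exact constant needs checking), which is nonzero. This is where equal length $n$ of $x$ and $y$ is essential.
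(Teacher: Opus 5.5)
Your Step 1 and the ($\Leftarrow$) direction are fine and agree with the paper. The gap is in ($\Rightarrow$), at the step you flag as the main obstacle. Write $u_0=i_1-1$, $u_j=i_{j+1}-i_j-1$ for $1\le j<\ell$, $u_\ell=n-i_\ell$, and $B_g=\prod_{j=0}^{\ell}\binom{u_j}{g_j}$ with $|g|=g_0+\cdots+g_\ell$.

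Your induction requires the lower-degree part of each $B_g$ with $|g|=k-\ell$ to lie in the span of the $B_g$ with $|g|=k-\ell$ exactly. By the inductive hypothesis that part lies in the span of the $B_{g'}$ with $|g'|<k-\ell$, but you still need those to lie in the span of the exact-degree products. Multiplying by powers of $(d_0+\cdots+d_\ell)/n$ does not give this. It turns lower-degree terms into degree-$(k-\ell)$ \emph{monomials}, not into degree-$(k-\ell)$ binomial products, so you are back to expressing monomials through the $B_g$. The resulting change of basis is neither triangular nor evidently invertible.

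In fact $n\neq 0$ cannot be the relevant condition. For $n=1$, $k=2$, $\ell=1$ the available products are $i_1-1$ and $1-i_1$, which span only a line, even though $\sum_j d_j=1\neq 0$. Correspondingly, the lemma is false for $k>n$: $x=0$ and $y=1$ have identical (empty) $(2,1)$-decks but different zeroth moments. Your argument never uses $k\le n$, so it cannot be complete.

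The missing ingredient is the padding identity you gesture at with Vandermonde and then abandon. Since $(g_j+1)\binom{u_j}{g_j+1}=(u_j-g_j)\binom{u_j}{g_j}$ and $\sum_{j=0}^{\ell} u_j=n-\ell$, we get (with $e_j$ the $j$-th unit vector)
\[
\sum_{j=0}^{\ell}(g_j+1)\,B_{g+e_j}=(n-\ell-|g|)\,B_g .
\]
For $|g|\le k-\ell-1$ the coefficient is at least $n-k+1\ge 1$ when $k\le n$. Iterating puts every $B_g$ with $|g|\le k-\ell$ in the span of those with $|g|=k-\ell$. This includes the products with $g_\ell=0$, which are triangular with respect to monomials in $u_0,\dots,u_{\ell-1}$ and hence span all polynomials of degree at most $k-\ell$.

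Combinatorially this identity is the standard fact that the $k$-deck determines shorter decks. That is exactly how the paper proceeds. It first notes that $(k,t)$-equivalence implies $(k',\ell)$-equivalence for $k'\le k$, $\ell\le t$. It then uses only patterns $\star^{g_1}1\cdots\star^{g_\ell}1$ with no trailing wildcards, whose weights form a triangular basis directly, so no homogenization is needed.
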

\begin{proof}
For the forward direction, assume $x \keq{k}{t} y $. It is easily seen that this implies that $x \keq{k'}{\ell} y$ for any $k'\leq k, \ell\leq t$. Thus, 
\[
N(\star^{g_1} 1 \star^{g_2}   \dots \star^{g_\ell} 1 , x )
=
N(\star^{g_1} 1 \star^{g_2} \dots \star^{g_\ell} 1 , y)
\] 
whenever $g_1 + g_2 + \dots + g_\ell \leq k - \ell$, where $\star^{n}$ denotes a run of $n$ wildcards. Notice that
\[
N(\star^{g_1} 1 \star^{g_2} \dots \star^{g_\ell} 1,x)
=
\sum_{1\leq i_1<\cdots<i_\ell\leq n}
\binom{i_1-1}{g_1}\binom{i_2-i_1-1}{g_2}\cdots \binom{i_\ell-i_{\ell-1}-1}{g_\ell}
x_{i_1}\cdots x_{i_\ell}.
\]
Now make the change of variables
\[
u_1=i_1-1,\qquad u_2=i_2-i_1-1,\qquad \dots,\qquad u_\ell=i_\ell-i_{\ell-1}-1.
\]
The polynomials
\[
\binom{u_1}{g_1}\binom{u_2}{g_2}\cdots\binom{u_\ell}{g_\ell},
\qquad g_1+\cdots+g_\ell\leq k-\ell,
\]
span all polynomials in $u_1,\dots,u_\ell$ of total degree at most $k-\ell$. Hence, equality of the above counts for all $g_1+\cdots+g_\ell\leq k-\ell$ implies that for every polynomial $P$ of total degree at most $k-\ell$, we have
\[
\sum_{1\leq i_1<\cdots<i_\ell\leq n}
P(i_1,\dots,i_\ell)x_{i_1}\cdots x_{i_\ell}
=
\sum_{1\leq i_1<\cdots<i_\ell\leq n}
P(i_1,\dots,i_\ell)y_{i_1}\cdots y_{i_\ell}.
\]
Taking $P(i_1,\dots,i_\ell)=i_1^{a_1}\cdots i_\ell^{a_\ell}$
with $a_1+\cdots+a_\ell\leq k-\ell$ gives the forward direction.

For the other direction,  suppose for all $\ell \leq t,$ and $a_1 + a_2 + \dots + a_\ell \leq k -\ell$, we have 
\[\sum_{1 \leq i_1 < i_2 < \dots < i_\ell \leq n} i_1^{a_1} i_2^{a_2} \dots i_\ell^{a_\ell} x_{i_1} x_{i_2} \dots x_{i_\ell} = \sum_{1 \leq i_1 < i_2 < \dots < i_\ell \leq n} i_1^{a_1} i_2^{a_2} \dots i_\ell^{a_\ell} y_{i_1} y_{i_2} \dots y_{i_\ell}. \]
Then, for any word $\star^{g_0}  1 \star^{g_1} 1 \dots 1 \star^{g_\ell} $ with $\ell \leq t$ and $g_0 + g_1 + \dots + g_\ell \leq k-\ell$, we know 
\[N(\star^{g_0}  1 \star^{g_1} 1 \dots 1 \star^{g_\ell} ,x) = \sum_{1 \leq i_1 < \dots <i_\ell \leq n } {i_1 -1 \choose g_0} {i_2 - i_1 - 1 \choose g_1} \dots {n - i_\ell \choose g_\ell} x_{i_1}x_{i_2} \dots x_{i_\ell}.\]
Since the weight is a polynomial in $(i_j)_j$ with degree at most $k-\ell$, the assumption immediately gives 
\[\sum_{1 \leq i_1 < \dots <i_\ell \leq n } {i_1 -1 \choose g_0} {i_2 - i_1 - 1 \choose g_1} \dots {n - i_\ell \choose {g_\ell}} x_{i_1}x_{i_2} \dots x_{i_\ell} \]
\[ = \sum_{1 \leq i_1 < \dots <i_\ell \leq n } {i_1 -1 \choose g_0} {i_2 - i_1 - 1 \choose g_1} \dots {n - i_\ell \choose {g_\ell}} y_{i_1}y_{i_2} \dots y_{i_\ell} = N(\star^{g_0}  1 \star^{g_1} 1 \dots 1 \star^{{g_\ell}},y).\]
Notice that 
\[
N(\star^{g_0}  \overset{{\downarrow}}{1} \star^{g_1} 1 \dots 1 \star^{g_\ell},y) + N(\star^{g_0}  \overset{\downarrow}{0} \star^{g_1} 1 \dots 1 \star^{g_\ell},y) = N(\star^{g_0}  \overset{\downarrow}{\star} \star^{g_1} 1 \dots 1 \star^{g_\ell},y).
\]
Repeatedly applying
$N(u0v,y)=N(u\star v,y)-N(u1v,y)$ eliminates one zero at each
step, and hence, $N(w,y)$ can be written as a linear combination of counts
of length-$k$ words containing only wildcards and at most $t$ ones.
\end{proof}
%--------------------------------%

For ease of working with this characterization of $(k,t)$-equivalence, we set the following definition.
%--------------------------------%
\begin{definition} 
Let $x$ be a binary word of length $n$. Then for non-negative integers $a_1, a_2, \dots ,a_\ell$ with $\ell \leq n$, we define \textit{the moment of $x$ at $(a_1, a_2, \dots, a_\ell)$} to be
\[\rM_{(a_1 , \dots, a_\ell) } (x) = \sum_{1 \leq i_1 < \dots < i_\ell \leq n } i_1^{a_1} \dots i_r^{a_\ell} x_{i_1} \dots x_{i_\ell}.\]
\end{definition}
%--------------------------------%

\end{section}
%----------------------------------------------------------%

%----------------------------------------------------------%
\begin{section}{Proofs for Restricted Classes}
\label{sec:proofsforrestrictedclasses}
\subsection{Fixed Number of Ones}
\label{subsec:fixedweight}

In this section, we study the reconstruction threshold for words with a fixed number of 1s.

\thmfixedweight*
\begin{proof}%[Proof of \cref{thm:fixedweight}]
Let $s \geq 5$ and $x \in \cW_s,y \in \set{0,1}^+$  such that $\cD_s(x) = \cD_s(y)$. Note $N(1,x)=N(1,y)=s$. We will show that $x = y$. First, define $g=(g_i)_i$ by letting $x=0^{g_0} 1 0^{g_1} 1 \ldots 1 0^{g_s}$, and similarly define $g'=(g_i')_i$ by $y=0^{g_0'} 1 0^{g_1'} 1 \ldots 1 0^{g_s'}$.

Let $\delta \defined g' - g$. We need to show $\delta = 0$. 

We begin by considering the following word count for $r \leq s -1$:
\[N(1^r 0 1^{s-1 - r} ,x) = \sum_{i = 0}^s g_i {i \choose r} { s - i \choose s-1 -r}%=g_r\binom{r}{r}\binom{s-r}{s-1-r}+g_{r+1}\binom{r+1}r\binom{s-r-1}{s-1-r},
=g_r(s-r)+g_{r+1}(r+1),
\]
since the binomial coefficients are only nonzero when $i = r, r+ 1$. Equality of these word counts gives 
\[(s - r)\delta_r + (r+1)\delta_{r+1}= 0 \] for each $r \leq s-1$.
From this, we recursively compute that
\begin{equation}
\label{eq:gapidentity}
g'_i = g_i + \delta_0(-1)^i {s \choose i}.
\end{equation}
Thus, if $\delta_0 = 0$, we are done. If not, consider 
\[N(001^{s-2} ,x) = {s -1 \choose 2}{g_0 \choose 2} + (s-2){g_0 +g_1 \choose 2}  + {g_0 + g_1 + g_2 \choose 2}, \]
and the analogous holds for $y$. Substituting \eqref{eq:gapidentity} into the equality $N(001^{s-2},x) = N(001^{s-2},y)$ with the above expansion gives
\begin{equation}
\label{eq:g2identity}
g_2 = g_1 - \frac{s(s+1)}{4} \delta_0.
\end{equation}
The computations behind this equality are somewhat involved, so we present them in the Appendix~\ref{sec:appendix}. Next, by considering the position of the first 1, we have
\[N(001^{s-3},x) = \sum_{i=0}^3{s-i-1 \choose 3-i} {g_{0} + g_{1} + \dots + g_{i} \choose 2}.\]
Equality of these counts together with \eqref{eq:gapidentity} and \eqref{eq:g2identity} gives 
\begin{equation}
\label{eq:g3identity}
g_3 = g_1 + \frac{s(s+1) (s-4)}{12}\delta_0.
\end{equation}
Once again, we give a complete proof of this identity in \cref{sec:appendix}.
Finally, by again considering the position of the first 1,
\[N(0001^{s-3} ,x) = \sum_{i =0}^3 {s -i -1 \choose 3-i }{g_0 + g_1 +\dots +g_{i}  \choose 3 }.\]
Then, equality of these counts together with $\eqref{eq:gapidentity}$, $\eqref{eq:g2identity}$, and $\eqref{eq:g3identity}$ gives
\begin{equation}
\label{eq:delta0identity}
s(s-1)(s-2)(s-3)(s-4)(s+1)(s^3 - 9s^2 + 8s+6)  \delta_0^3 = 0.
\end{equation} Once again, details about how to obtain this equality are given in \cref{sec:appendix}.
Since $s^3 - 9s^2 + 8s+6$ has no integer roots greater than or equal to $5$, the quantity above can only vanish when $\delta_0 = 0$ for all $s \geq 5$. Hence, 
\[r(\cW_s) \leq s,\]
for all such $s$. 
\end{proof}

\begin{remark}
It can also be shown by extending this technique that $r(\cW_s) \leq s-1$ for $s \geq 7$, but we omit the proof in this paper. 
\end{remark}

\begin{remark}
 Note that the condition $s\geq 5$ is required as $10^5110^51 \keq{4} 01010^61010$.    
\end{remark}

We note an easy lower bound on $r(\cW_s)$. Since any distinct $k$-equivalent pair of length $2s$ can be complemented if necessary so that both words have less than $s$ ones, we have
\[e^{\Omega(\sqrt{\log2s})}\leq r(2s)\leq r(\cW_s). \] 

%----------------------------------------------------------%

%----------------------------------------------------------%
\subsection{Polynomial Support}
\label{subsec:polynomialsupport}

%--------------------------------%

Now, we study the reconstruction threshold for $\mathcal{P}_d$, the family of words with polynomial support. 

\thmpolysupport* 

\begin{proof}%[Proof of \cref{thm:polynomialsupport}]
    If $d = 0$, $|N(1, x)| = 1$, so $N(01, x)$ and $N(10, x)$ together determine the position of the $1$ in $x$, meaning that $x$ is $2$-reconstructible. 

    Henceforth, we only consider $d \geq 1$. Let $T = N(1, x)$. Since $x \in \cP_d$, there exists a degree $d$ polynomial $p(x) \in \RR[x]$ that is monotonic on $[T]$ such that $\supp(x) = \{p(1), \dots, p(T)\}$. Without loss of generality, we may assume $p$ is increasing on $[T]$.

    Suppose there exists some $y \in \{0,1\}^+$ such that $x \overset{d+2}\sim y$. Then,  $x \overset{1}\sim y$, which implies that $|x|=|y|$ and $N(1,y)=N(1,x) = T$. Let $f : [T] \rightarrow \NN$ be an increasing function determining the positions of the 1s in $y$. We view $p, f$ as vectors in $\RR^T$ and consider the difference $\delta = p - f$. We show that $\delta = 0$ by computing $\|\delta\|^2=\sum_t\delta_t^2$. 
    
    We first expand and rearrange $\|f\|^2 = \|p - \delta\|^2$ to obtain 
    \begin{equation}\label{eq:deltaidentity}
        \|\delta\|^2=\|f\|^2-\|p\|^2+2\inner{p}{\delta}.
    \end{equation}
    Because $p(t)$ is a degree $d$ polynomial in $\RR[x]$, it must be of the form $p(t) = \sum_{k = 0}^d c_kt^k$ for some $c_k \in \RR$. Thus, 
    \[
       \inner{p}{\delta} = \sum_{t = 1}^T \delta_t p(t) = \sum_{t = 1}^T \delta_t  \sum_{k = 0}^d c_kt^k = \sum_{k = 0}^d c_k \sum_{t = 1}^T t^k \delta_t. 
    \]

    Substituting into Equation \eqref{eq:deltaidentity}, we get  
    \begin{equation}\label{eq:deltaidentitysub}
        \|\delta\|^2=\|f\|^2-\|p\|^2+2\sum_{k = 0}^d c_k \sum_{t = 1}^T t^k \delta_t.
    \end{equation}
    Now, we prove that this is $0$. For all $a, k \geq 0$ with $a + k \leq d + 1$, we observe that for $A_k = (a, \overbrace{0, \dots 0}^k)$:
    \[
        M_{A_k}(x) = \sum_{i<j_1<\cdots<j_k}i^ax_i\cdot x_{j_1}\cdots x_{j_k} = \sum_{t=1}^T p(t)^a{T-t\choose k}.
    \]
    For each $k$ such that $a + k \leq d + 1$, the expression $T - t \choose k$ is a degree-$k$ polynomial in $t$ since $T$ is fixed. Hence, if we define
    \[
        s_{a,k}(x)\coloneqq \sum_{t=1}^Tt^kp(t)^a,
    \]
    we can compute its value using $M_{A_k}(x)$. Because $x \overset{d+2}\sim y$, Lemma \ref{lem:momentinterpretation} gives that $M_{A_k}(x) = M_{A_k}(y)$ for $a + k \leq d + 1$. Since the $s_{a, k}(x)$ and $s_{a, k}(y)$ values are uniquely determined by the $M_{A_k}(x)$ and $M_{A_k}(y)$ values respectively, this implies that $s_{a, k}(x) = s_{a, k}(y)$ for $a + k \leq d + 1$. In particular, when $a = 2$, $k = 0$, we have $\|p\|^2 = \|f\|^2$, and when $a = 1$, $k \leq d$, we have $\sum_t t^k\delta_t = 0$. Substituting into Equation \eqref{eq:deltaidentitysub}, we conclude $\|\delta\|^2 = 0$, which implies that $x = y$.
\end{proof}

%--------------------------------%
%----------------------------------------------------------%

%----------------------------------------------------------%
\subsection{Bounded Number of Runs}
\label{subsec:boundedruns}

For $w \in \{0, 1\}^n$, a \textit{run of 1s} is a maximal interval $[i, j] \subset [n]$ such that $w_i = w_{i+1} = \dots = w_j = 1$. Let $R(w)$ denote the number of runs of 1s in $w$, and recall that
\[
\cB_\ell \defined \set{ x \in \{0, 1\}^+ : R(x) \leq \ell}.
\]
For a sequence $\delta\in\{-1,0,1\}^n$, let $S(\delta)$ be the number of runs of either $-1$ or $1$, which we call signed runs. For example, if $\delta =(-1, 1,0,-1,0,-1,-1)$, then the corresponding intervals of the signed runs are $[1, 1], [2, 2], [4, 4],$ and $[6,7]$, so $S(\delta)=4$.

The following lemma gives control over the number of signed runs for the difference vector of two words in $\cB_\ell$.

%--------------------------------%
\begin{lemma}\label{lem:bounded-sign-number}
If $x, y \in \mathcal{B}_\ell$ and $\delta_i = x_i - y_i$, then 
$
S(\delta) \leq 2\ell.
$
\end{lemma}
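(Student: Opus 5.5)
We prove the lemma by charging each signed run of $\delta$ to an endpoint of a run of 1s in $x$ or in $y$. Since $x,y\in\cB_\ell$, there are at most $\ell$ runs of 1s in $x$ and at most $\ell$ in $y$. We will show that the number of runs of $1$ in $\delta$ is at most $R(x)$, the number of runs of $-1$ is at most $R(y)$, and hence $S(\delta)\leq R(x)+R(y)\leq 2\ell$.

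Take a maximal run of $+1$'s in $\delta$, say on the interval $[i,j]$. On this interval $\delta_m=1$, which forces $x_m=1$ and $y_m=0$ for every $m\in[i,j]$. In particular $[i,j]$ lies inside a single run of 1s of $x$; call it $I$.

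The key step is to show that distinct runs of $+1$ in $\delta$ lie in distinct runs of 1s of $x$. Suppose two of them, $[i,j]$ and $[i',j']$ with $j<i'$, lay in the same run $I$ of $x$. Then $x_m=1$ for every $m\in[j+1,i'-1]$, so $\delta_m=1-y_m\in\{0,1\}$ there. Because the two $+1$-runs are maximal and distinct, some $m$ in between has $\delta_m\neq 1$, so $\delta_m=0$. That is no contradiction: $\delta$ can alternate $1,0,1$ inside a single run of $x$ whenever $y$ has ones there. So the map from $+1$-runs to runs of $x$ need not be injective, and this crude charging fails. We need a sharper charge that uses the endpoints of runs of both words.

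Instead, charge each signed run to its left endpoint $i$. If $\delta_i\neq 0$, then either $i=1$ or $\delta_{i-1}\neq\delta_i$. We claim that at such an $i$, either $x$ or $y$ begins a run of 1s.
\begin{itemize}
\item If $\delta_i=1$, then $x_i=1$ and $y_i=0$. Either $i=1$ or $x_{i-1}=0$, in which case $x$ starts a run at $i$. Otherwise $x_{i-1}=1$, and since $\delta_{i-1}\neq 1$ we must have $y_{i-1}=1$, so $\delta_{i-1}=0$. Then $y$ ends a run at $i-1$.
\item If $\delta_i=-1$, the symmetric argument shows that $y$ starts a run at $i$, or $x$ ends a run at $i-1$.
\end{itemize}
So each signed run starts at a start or an end of some run of 1s of $x$ or $y$, and there are $2R(x)+2R(y)\leq 4\ell$ such boundaries. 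This only gives $S(\delta)\leq 4\ell$, so a finer count is needed.

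To get $2\ell$, look at the sequence of pairs $(x_m,y_m)\in\{00,01,10,11\}$ as $m$ increases. Each signed run is a maximal block where the pair is $10$ or $01$. Between two consecutive signed runs the pair changes, and any change of the pair changes $x$, $y$, or both. A transition $10\to 01$ flips both words. A transition through $00$ or $11$ flips $x$ once and $y$ once. Count the total number of value changes $T(x)+T(y)$, where $T(w)\leq 2R(w)$ is the number of $0/1$ switches in $w$. Between consecutive signed runs there are at least two switches in total, since both coordinates must change, with one exception: a $10$-run and a $10$-run separated by $11$ or $00$ costs exactly two switches, which is still at least two.

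Adding in the initial segment before the first signed run and the final segment after the last one, and using that both words are padded with $0$ at the two ends (so each run of 1s contributes exactly two switches), we obtain $2\,S(\delta)\leq T(x)+T(y)\leq 2R(x)+2R(y)\leq 4\ell$. Hence $S(\delta)\leq 2\ell$.

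The main obstacle is making this boundary bookkeeping precise. Each signed run needs one switch entering it and one switch leaving it, and a single switch can be shared by two adjacent signed runs. So the bound has to be arranged as \emph{at least two switches per signed run, amortized}. We would do this by charging the entering switch of each signed run to the coordinate that becomes $1$ and the leaving switch to the coordinate that becomes $0$.
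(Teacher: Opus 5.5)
Your final switch-counting argument is essentially the paper's proof. With zero padding, $2S(\delta)$ counts the starts and ends of signed runs, and you bound this by the total number of switches $T(x)+T(y)=2R(x)+2R(y)\leq 4\ell$. The paper does this in one line: it writes $2S(\delta)=\sum_{i=0}^{n}|\delta_{i+1}-\delta_i|$ and applies the pointwise triangle inequality $|\delta_{i+1}-\delta_i|\leq|x_{i+1}-x_i|+|y_{i+1}-y_i|$.

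Your gap count does prove $2S(\delta)\leq T(x)+T(y)$: at least two switches between consecutive signed runs, at least one before the first, and at least one after the last. The reason is not that ``both coordinates must change'', which fails for two consecutive runs of the same sign. The right reason is this: either the two runs are adjacent, so the pair goes $10\leftrightarrow 01$ and both coordinates flip; or they are separated by a block of $00$/$11$ pairs, whose entry and exit each cost one switch at different positions.

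Your last paragraph, however, is wrong in two places.
\begin{itemize}
\item No switch is ever shared between adjacent signed runs. If one were, you could not reach the count $2S(\delta)$.
\item The rule ``charge entering to the coordinate that becomes $1$, leaving to the coordinate that becomes $0$'' is undefined when a $10$-run is entered from $11$ or left into $11$. In those cases only $y$ flips, and in the opposite direction to what the rule needs.
\end{itemize}
The precise bookkeeping you were after is just the paper's pointwise inequality: at each position $i$, the number of signed-run boundaries is at most the number of coordinates of $(x,y)$ that flip.
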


\begin{proof}
For $w \in \{0, 1\}^n$, using the convention that $w_0 = w_{n+1} = 0$, we have
\[
2R(w) = \sum_{i = 0}^{n} |w_{i+1} - w_i|,
\]
because each run contributes 1 to the right-hand sum whenever it starts or ends. Similarly, we have
\[
2S(\delta) = \sum_{i = 0}^{n} |\delta_{i+1} - \delta_i|,
\]
as each signed run contributes 1 to the right-hand sum whenever it starts or ends (transitions from $1$ to $-1$ contribute 2 as the current 1-run ends and the next $-1$-run starts, and similarly for transitions from $-1$ to $1$). Therefore, since $x, y \in \cB_\ell$, we have
\begin{align*}
2S(\delta) 
&= 
\sum_{i = 0}^{n} |\delta_{i+1} - \delta_i| \\
&=
\sum_{i = 0}^{n} |(x_{i+1} - y_{i+1}) - (x_{i} - y_{i})| \\
&\leq
\sum_{i = 0}^{n} |x_{i+1} - x_{i}| + \sum_{i = 0}^{n} |y_{i+1} - y_{i}| \\
&\leq
4\ell.\qedhere
\end{align*}

\end{proof}
%--------------------------------%

The moment characterization in Lemma \ref{lem:momentinterpretation} gives algebraic constraints on words that are weakly equivalent. The next lemma connects these moment sums to the signed runs in the difference vector.

%--------------------------------%
\begin{lemma}\label{lem:sign-condition}
For $\delta \in \{-1, 0, 1\}^n$, let $s = S(\delta)$. If
\[
\sum_{i = 1}^{n} i^a \delta_i = 0\quad \text{ for all } a \leq s-1,
\]
then $\delta = 0$.
\end{lemma}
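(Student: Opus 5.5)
We argue by contradiction with a sign-change (Descartes / Vandermonde) argument. Suppose $\delta\neq 0$, and let $I_1<I_2<\dots<I_s$ be the intervals of the signed runs of $\delta$, listed left to right. On each $I_j$ the entries of $\delta$ are all equal to a common sign $\epsilon_j\in\{-1,1\}$, and $\delta_i=0$ outside $\bigcup_j I_j$.

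Consecutive runs need not have opposite signs: two $1$-runs may be separated by zeros. Merge consecutive runs of equal sign (together with the zeros between them) into maximal blocks $J_1<\dots<J_m$. Then $m\leq s$, the sign of $\delta$ is constant on each block, and it alternates between consecutive blocks. Since $\delta\neq 0$, we have $m\geq 1$.

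The key step is to build a polynomial $P$ of degree $m-1\leq s-1$ such that $P(i)\delta_i\geq 0$ for all $i$, with strict inequality whenever $\delta_i\neq 0$. Choose real numbers $c_1<\dots<c_{m-1}$ with $c_j$ lying strictly between $\max J_j$ and $\min J_{j+1}$; the half-integer $\max J_j+\tfrac12$ works. Set $P(t)=\pm\prod_{j=1}^{m-1}(t-c_j)$, with the overall sign chosen to agree with the sign of $\delta$ on $J_m$. The polynomial $P$ has constant sign on each block, because no $c_j$ lies inside a block. It also changes sign exactly at each $c_j$, because the roots are simple. So the sign of $P$ alternates from block to block exactly as the sign of $\delta$ does, and $P$ is nonzero at every integer, since the roots are half-integers.

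Expanding $P(t)=\sum_{a=0}^{m-1}b_a t^a$ and applying the hypothesis for each $a\leq m-1\leq s-1$ gives
\[
0=\sum_{a=0}^{m-1} b_a\sum_{i=1}^n i^a\delta_i=\sum_{i=1}^n P(i)\delta_i .
\]
Every term on the right is nonnegative, and the terms with $\delta_i\neq 0$ are strictly positive. Since $\delta\neq0$, at least one such term exists, which is a contradiction. Hence $\delta=0$. The case $m=1$ is the one where $P$ is a constant and only the hypothesis with $a=0$ is needed.

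The step needing the most care is the bookkeeping that the number of sign changes of $\delta$ is at most $s-1$, so that $P$ has low enough degree. The merging into blocks handles this, as long as the zeros inside a merged block contribute nothing to the sum; they do not, because $\delta_i=0$ there.
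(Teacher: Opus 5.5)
Your proof is correct and follows essentially the same route as the paper. The paper likewise builds a sign-matching polynomial $F$ with half-integer roots $b_j+\tfrac12$, placed only between consecutive runs of opposite sign; this is equivalent to your merging into blocks, and it keeps $\deg F\leq s-1$. The contradiction then comes from $\sum_i \delta_i F(i)=0$, exactly as in your argument.
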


\begin{proof}
We first show that there exists a polynomial $F(t) \in \mathbb{R}[t]$ with $\deg F(t) \leq s-1$ such that $\delta_iF(i) > 0$ for any $i \in [n]$ with $\delta_i \neq 0$.

Order the corresponding intervals of the signed runs as $I_1 < I_2 < \cdots < I_s$, and denote $I_j = [a_j, b_j]$. Let $\sigma_j \in \{+1, -1\}$ be the sign of the run $\delta_{I_j}$. Choose roots $c_j =b_j+\frac 12$ having the property that  $b_j < c_j < a_{j+1}$, and define
\[
F(t) := \sigma_1\cdot\!\!\prod_{\substack{1 \leq j \leq s-1 \\ \sigma_j \neq \sigma_{j+1}}} (c_j-t).
\]
We see that $F(i)$ changes sign whenever $\delta_i$ changes sign for $i \in [n]$, and since $F(a_1)$ has sign $\sigma_1$ we have $\delta_iF(i) > 0$ whenever $\delta_i \neq 0$.

Now, since $\deg F(t) \leq s-1$, we can write $F(t) = \sum_{a = 0}^{s-1} \alpha_a t^a$ for $\alpha_0, \dots, \alpha_{s-1} \in \mathbb{R}$. By the assumption of the lemma, we have 
\[
\sum_{i = 1}^{n} i^a \delta_i = 0 \quad  \text{ for all } a \leq s-1,
\]
so
\[
\sum_{i = 1}^{n} \delta_iF(i) = \sum_{i = 1}^{n} \delta_i \sum_{a = 0}^{s-1} \alpha_a i^a = \sum_{a = 0}^{s-1} \alpha_a \sum_{i = 1}^{n} \delta_i i^a = 0,
\]
which (by construction) can only happen if $\delta = 0$.
\end{proof}

%--------------------------------%

With the two lemmas, we can now prove our bound on the reconstruction threshold of $\cB_\ell$.

%--------------------------------%
%\thmbddruns*

\begin{proof}[Proof of \cref{thm:bdd-runs}]
First suppose that $x, y \in \cB_\ell$, and let $\delta =x-y$. By Lemma \ref{lem:bounded-sign-number}, we have $s := S(\delta) \leq 2\ell$. For $k \geq 2\ell$, if $x \overset{k}{\sim} y$, then by \cref{lem:momentinterpretation}, we have $\sum_{i = 1}^{n} i^a \delta_i = 0$ for all $a \leq s-1$, but by Lemma \ref{lem:sign-condition}, this forces $x = y$. Since this holds for any $k \geq 2\ell$, we obtain the desired upper bound for $r'(\cB_\ell)$.

Now suppose that $y \in \{0, 1\}^+$. For any $k \geq 2\ell + 1$, we know that $x \in \cB_\ell \text{ if and only if } N(1(01)^\ell, x) = 0$. Now if $N(1(01)^\ell, y) \neq 0$, we know that $y \notin \cB_\ell$, which distinguishes $x$ from $y$. Otherwise, we reduced the case to both $x, y \in \cB_\ell$, which has upper bound less than $2\ell + 1$.

For the lower bound, observe that any $x \in \{0, 1\}^{2\ell}$ is automatically in $\cB_\ell$, and hence from Dud\'ik and Schulman's \cite{Dudik2003-fz} lower bound, we have
\[
\exp\left(\Omega(\sqrt{\log 2\ell})\right) \leq r(2\ell) \leq r'(\cB_\ell).
\qedhere \] 
\end{proof}
%--------------------------------%

\end{section}
%----------------------------------------------------------%

%----------------------------------------------------------%
\begin{section}{Template Lemma} \label{sec:template}

%----------------------------------------------------------%
\begin{subsection}{Weak Equivalence Lower Bound}\label{sec:weak-deck-bound}

Recall that 
\[
S(k, t) 
:= 
\min\{n : \exists x, y \in \{0, 1\}^n, \text{ with } x \neq y, x \keq{k}{t} y\},
\]
and for $p, x, y \in \{0, 1\}^+$, $p(x, y) \in \{0, 1\}^+$ is obtained by replacing each $0$ in $p$ with $x$ and each $1$ by $y$. The next lemma generalizes \cite[Lemma 9]{Dudik2003-fz}, where the idea of blowing up template pairs is used to obtain a stronger equivalence. 

%--------------------------------%
\begin{lemma}\label{lem:template-lem}
Let $0 \leq a \leq k$ and $0 \leq t$, $T \leq K$. Suppose that
\[
t \geq \max_{0 \leq j \leq T} \left\lfloor\frac{j}{a + 1}\right\rfloor + \left\lfloor\frac{K - j}{k + 1}\right\rfloor.
\]
Then, for any binary words $x, y, p, q \in \{0, 1\}^+$, we have
\[
    x \keq{k}{a} y \text{ and } p \keq{K}{t} q \implies p(x, y) \keq{K}{T} q(x, y).
\]
Thus,
\[
S(K, T) \leq S(k, a)S(K, t).
\]
\end{lemma}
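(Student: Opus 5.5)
The plan is to count weak subwords of $P \defined p(x,y)$ directly. Each count splits into a sum over ways of distributing the pattern among the blocks of $P$. After regrouping, the sum becomes a linear combination of weak-subword counts of $p$ itself with at most $t$ non-wildcards. Those counts agree for $p$ and $q$ by hypothesis. Note that $p \keq{K}{t} q$ forces $|p|=|q|$, and it gives equality of counts for all patterns of length $m\leq K$ with at most $t$ non-wildcards (by summing out wildcards, as in the proof of \cref{lem:momentinterpretation}). Likewise $x\keq{k}{a} y$ gives $N(v,x)=N(v,y)$ whenever $|v|\leq k$ and $v$ has at most $a$ non-wildcards.

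\textbf{Step 1 (block decomposition).} Fix $w\in\{0,1,\star\}^K$ with at most $T$ non-wildcards. An embedding of $w$ into $P$ determines two things:
\begin{itemize}
\item a factorization $w=w_1w_2\cdots w_m$ into nonempty consecutive pieces;
\item a strictly increasing choice of blocks $b_1<\dots<b_m$ of $P$, with piece $w_j$ embedded inside block $b_j$.
\end{itemize}
Hence
\[
N(w,P)=\sum_{w=w_1\cdots w_m}\ \sum_{b_1<\cdots<b_m}\ \prod_{j=1}^m N(w_j,z_{b_j}),
\]
where $z_b\in\{x,y\}$ is the block in position $b$, according to $p_b$.

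\textbf{Step 2 (good and bad pieces).} Call a piece \emph{good} if $|w_j|\leq k$ and $w_j$ has at most $a$ non-wildcards, and \emph{bad} otherwise.
\begin{itemize}
\item For a good piece, $N(w_j,x)=N(w_j,y)=:c_j$, so its factor does not depend on the letter $p_{b_j}$. We encode it by a wildcard $\star$ in position $j$ of an auxiliary pattern $u$, with weight $c_j$.
\item For a bad piece, we write $N(w_j,z_b)=N(w_j,x)[p_b=0]+N(w_j,y)[p_b=1]$. Expanding the product, we get a sum over $u\in\{0,1,\star\}^m$ that has a letter in each bad position and $\star$ in each good position.
\end{itemize}
For a fixed factorization, this gives $\sum_{b}\prod_j N(w_j,z_{b_j})=\sum_u \gamma_u\, N(u,p)$. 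The coefficients $\gamma_u$ depend only on $w$, $x$, $y$ and the factorization, not on $p$. The same identity holds with $q$ in place of $p$.

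\textbf{Step 3 (counting bad pieces).} This is the crux. Each $u$ has length $m\leq K$, and we need its number of non-wildcards, i.e.\ the number of bad pieces, to be at most $t$. Split the bad pieces into two types.
\begin{itemize}
\item Type 1: pieces with at least $a+1$ non-wildcards. Let $j$ be the total number of non-wildcards in type-1 pieces. Then $j\leq T$, and there are at most $\lfloor j/(a+1)\rfloor$ type-1 pieces.
\item Type 2: the remaining bad pieces, each of length at least $k+1$. The type-1 pieces occupy at least $j$ of the $K$ positions of $w$, so there are at most $\lfloor (K-j)/(k+1)\rfloor$ type-2 pieces.
\end{itemize}
The hypothesis on $t$ then bounds the number of bad pieces by $t$. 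Consequently every $u$ that appears satisfies $N(u,p)=N(u,q)$, and summing over factorizations gives $N(w,P)=N(w,q(x,y))$. The subtle point is ensuring that type-1 and type-2 pieces are counted without double use of positions; defining type 1 first and assigning type 2 only to the remaining bad pieces handles this.

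\textbf{Step 4 (the bound on $S$).} Take distinct $x\keq{k}{a}y$ of length $S(k,a)$ and distinct $p\keq{K}{t}q$ of length $S(K,t)$. Then $|x|=|y|$, so $p(x,y)$ and $q(x,y)$ both have length $S(k,a)S(K,t)$. They are distinct: at a position where $p$ and $q$ differ, the corresponding blocks are $x$ in one word and $y$ in the other, and $x\neq y$. By the first part they are $(K,T)$-equivalent, which gives $S(K,T)\leq S(k,a)S(K,t)$.
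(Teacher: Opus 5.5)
Your proposal is correct and takes essentially the same route as the paper. You split an embedding into pieces placed in distinct blow-up blocks, replace the \emph{good} pieces ($|w_j|\le k$ and at most $a$ non-wildcards) by wildcards in an auxiliary pattern for $p$, and bound the number of bad pieces by separating those with more than $a$ non-wildcards from those of length more than $k$, exactly as the paper does with $\cI_1,\cI_2$. Your Step 4 is slightly more careful than the paper's, since you explicitly use $|x|=|y|$ to justify $p(x,y)\neq q(x,y)$.
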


\begin{proof}
Since $p \keq{K}{t} q$, we have $|p| = |q| =: l$. By definition, $p(x, y) = p_1(x, y)\dots p_l(x, y)$, where
\[
p_i(x, y) = 
\begin{cases}
x \text{ if } p_i = 0, \\
y \text{ if } p_i = 1.
\end{cases}
\]
We will call each $p_i(x, y)$ a \textit{blow-up block} for the remainder of the proof.

Let $w \in \{0, 1, \star\}^{\leq K}$ with at most $T$ non-wildcards, and for $m \leq |w|$, let $w = w^{(1)} \dots w^{(m)}$ be a partition of $w$ into $m$ nonempty consecutive blocks. A \textit{partition embedding} is a strictly increasing map $s:[m] \to [l]$, which specifies that block $w^{(i)}$ is embedded into the blow-up block $p_{s(i)}(x, y)$. Let $K_i = |w^{(i)}|$ and $T_i$ be the number of non-wildcards in $w^{(i)}$. Then, for each map $s$, we can associate a ``pattern'' $v \in \{0, 1, \star\}^m$, where
\[
v_i
=
\begin{cases}
p_{s(i)} \quad \text{if } K_i > k \text{ or } T_i > a, \\
\star \quad \text{ otherwise}.
\end{cases}
\]
Let $\cP_{w}$ be the set of all possible partitions of $w$ into nonempty consecutive blocks. For a fixed partition $\cP \in \cP_{w}$ and a fixed pattern $v\in \{0,1,\star\}^{|\cP|}$, define
\[
N'_{\cP, v}(w,x,y)
:=
\prod_{i:v_i=0}N(w^{(i)}, x) \prod_{i:v_i=1}N(w^{(i)}, y) \prod_{i:v_i=\star}N(w^{(i)}, x).
\]
That is, $N'_{\cP, v}(w,x,y)$ counts the number of ways to embed the blocks $w^{(1)}\dots w^{(|\cP|)}$ within the blow-up blocks under a fixed partition embedding $s$ associated with pattern $v$. Indeed, if $v_i = 0$, then $p_{s(i)}(x, y) = x$, so there are $N(w^{(i)}, x)$ choices. A similar argument justifies the second product. If $v_i = \star$, we know that $K_i \leq k$ and $T_i \leq a$. Since $x \keq{k}{a} y$, we have $N(w^{(i)}, x)$ choices regardless of whether $p_{s(i)}(x, y) = x$ or $y$. Since embeddings in each block is independent, multiplying over all the choices yields the desired count.

For a fixed partition $\cP$, define $\cV(\cP) := \{v \in \{0, 1, \star\}^{|\cP|} : v_i = \star \iff K_i \leq k \text{ and } T_i \leq a\}$. Then, we have the counting formula
\begin{equation}\label{eq:template-count}
N(w, p(x, y))
=
\sum_{\cP \in \cP_{w}} \sum_{v \in \cV(\cP)} N'_{\cP, v}(w,x,y) \, N(v, p).
\end{equation}
For each $\cP$, $\cV(\cP)$ contains every admissible pattern that can arise from a partition embedding $s$. Then, for each $v \in \cV(\cP)$, $N(v, p)$ is exactly the number of partition embeddings $s$ satisfying $p_{s(i)} = v_i$ with $v_i \neq \star$. Therefore, summing over all the possible partitions and corresponding choices for the pattern gives $N(w, p(x, y))$.

We can write $N(w, q(x, y))$ similarly by replacing $p$ with $q$ in \eqref{eq:template-count}. So, to conclude that $N(w, p(x, y)) = N(w, q(x, y))$, it remains to show that $N(v, p) = N(v, q)$. That is, for each $\cP \in \cP_w$, the pattern $v$, associated with any partition embedding $s$, has length at most $K$ and at most $t$ non-wildcards. 

For a partition $\cP$ and pattern $v$, let $\cI_1 = \{i : T_i > a\}$ and $\cI_2 = \{i : K_i > k \text{ and } T_i \leq a\}$. Then for $j := \sum_{i \in \cI_1} T_i\geq |\cI_1|(a+1)$, we have 
\[
|\cI_1| \leq \Big\lfloor \frac{j}{a+1} \Big\rfloor.
\]
Now since $K_i\geq T_i$ for all $i$, we have 
\[K\geq\sum_iK_i\geq \sum_{i\in \cI_1}T_i+\sum_{i\in \cI_2}K_i\geq j+|\cI_2|(k+1),\]
%Now the remaining length can be at most $K - j$, 
so we have
\[
|\cI_2| \leq \Big\lfloor \frac{K - j}{k+1} \Big\rfloor.
\]
So the number of non-wildcards in $v$ is exactly
\[
|\cI_1| + |\cI_2| \leq \max_{0 \leq j \leq T}\left\{\Big\lfloor \frac{j}{a+1} \Big\rfloor + \Big\lfloor \frac{K - j}{k+1} \Big\rfloor \right\} \leq t
\]
by assumption.

To show the second part of the lemma, we can choose distinct $x, y \in \{0, 1\}^{S(k, a)}$ such that $x \keq{k}{a} y$ and distinct $p, q \in \{0, 1\}^{S(K, t)}$ such that $p \keq{K}{t} q$. Then $p(x, y) \neq q(x, y)$ and, by the first part of the lemma, $p(x, y) \keq{K}{T} q(x, y)$. Thus, there exist two distinct $(K,T)$-equivalent words of length $S(k,a)S(K,t)$, showing that $S(K,T) \leq S(k,a)S(K,t)$.
\end{proof}
%--------------------------------%

A direct application of the lemma gives \cref{thm:weak-deck-upper} (restated below for convenience), which improves the bound
\begin{equation}\label{eq:DS-weak-upper}
    S(k, t) \leq O_t \left(k^{t+1} \log k \right)
\end{equation}
in \cite[Lemma 8]{Dudik2003-fz}.

%--------------------------------%
\thmweakdeckupper*

\begin{proof}
Apply \cref{lem:template-lem} with $K =k$. With this choice of $K$, the second floor term of the condition on $t$ in the lemma vanishes. So when 
\[T \leq (a + 1) (b + 1) - 1,\]
the second part of \cref{lem:template-lem} gives 
\begin{equation}
\label{eq:templaterecursion}
S(k,T) \leq S(k,a) S(k,b) . 
\end{equation}
Now, let $r = \lceil \log_3 (t + 1) \rceil$,
and for $i\in [r]$ define 
\[T_i := \min\{ t, 3^{i} -1 \}. \]
Then, $T_r = t$ and $T_i  \leq  3 ( T_{i-1} + 1) -1 $. Now, we can apply $\eqref{eq:templaterecursion}$ with $a = T_{i-1}$ and $b=2$ to get 
\[ S(k,T_i ) \leq S(k,T_{i-1}) S(k,2) , \quad i \geq 2.\]
Iterating this bound from $i=r$ to $i=2$, and using the upper bound \eqref{eq:DS-weak-upper}, we obtain, for a fixed constant $C$
\[S(k,t) \leq S(k,2)^r \leq (Ck^3 \log k)^r = C^rk^{3 \lceil\log_3 (t + 1 ) \rceil } (\log k)^{\lceil\log_3 (t+1) \rceil} = k^{\frac{3}{\log 3} \log (t+1) + O(\log\log k)+O(\frac{\log t}{\log k})}. \qedhere \]

\end{proof}
%--------------------------------%

\end{subsection}
%----------------------------------------------------------%

%----------------------------------------------------------%
\begin{subsection}{Template Generated Classes}\label{sec:template-classes}

A natural question in the setting of Lemma \ref{lem:template-lem} is when the templates and substitutions preserve equivalence. The following result, restated from the introduction for convenience, provides a sufficient condition for this question.

%--------------------------------%
\thmtemplateequivalence*
%--------------------------------%

We start by giving some remarks and applications of the theorem before proving it. The conditions can be interpreted as the substitutions $x, y$ or the templates $p, q$ are far from being equivalent, since they do not even have the same proportion of 0s and 1s. Therefore, the theorem is saying that Lemma \ref{lem:template-lem} fails to construct a higher equivalence pair when the templates or the substitutions pairs are very different from each other. However, a more powerful use of it is to generate more classes of words that have same in-class reconstruction thresholds as much shorter words.

Consider the class
\[
\cT_d \defined \set{ x : x \text{ is a periodic binary word with period } d}.
\]
Equivalently, if $x \in \cT_d$, then $x = z^R$ for some $R \geq 1$ and $z \in \{0, 1\}^d$. The second statement of Theorem \ref{thm:template-equivalence} then gives us the following result on the in-class reconstruction threshold $r'(\cT_d)$.

%--------------------------------%
\corperiodic*

\begin{proof}
For $R \geq 1$, let $p = 0^R$ and $q = 1^R$; clearly $N(1, p) \neq N(1, q)$. Applying (2) in \cref{thm:template-equivalence}, for any $x, y \in \{0, 1\}^d$, we have
\[
x \keq y \iff x^R \keq y^R.
\]
Therefore, we see that distinguishing any pair of $d$-periodic words can be reduced to distinguishing two length $d$ words. So,
\[
r'(\cT_d) = r(d). \qedhere
\]
\end{proof}
%--------------------------------%

The following corollaries characterize word classes that preserve in-class reconstruction thresholds by applying \cref{thm:template-equivalence}. \cref{cor:template-family} is obtained by applying the second statement, which generalizes the setting in \cref{cor:periodic}, and \cref{cor:sub-family} is obtained by applying the first statement.

%--------------------------------%
\begin{corollary}\label{cor:template-family}
Let $p, q \in \{0, 1\}^+$, and let $\cF, \cG_{\cF} \subset \{0, 1\}^+$ be word classes. Suppose the following hold:
\begin{enumerate}
    \item For every distinct $x, y \in \cF$, there exist $p, q \in \{0, 1\}^+, \, |p| = |q|, \, N(1, p) \neq N(1, q)$, such that $p(x, y), q(x, y) \in \cG_{\cF}$.

    \item For every distinct $x', y' \in \cG_{\cF}$, there exist $p, q \in \{0, 1\}^+, \, |p| = |q|, \, N(1, p) \neq N(1, q)$, and distinct $x, y \in \cF$ such that $x' = p(x, y), y' = q(x, y)$.
\end{enumerate}
Then,
\[
r'(\cG_{\cF}) = r'(\cF).
\]
\end{corollary}

\begin{proof}
We just need to show that for any $k \geq 1$, there are distinct $x, y \in \cF$ where $x \keq y$ if and only if there are distinct $x', y' \in \cG_{\cF}$ where $x' \keq y'$.

Suppose $x \keq y$. Then by assumption 1, we have $p(x, y), q(x, y) \in \cG_{\cF}$ for some $p, q \in \{0, 1\}^+, |p| = |q|, N(1, p) \neq N(1, q)$. So, by (2) in \cref{thm:template-equivalence}, we have $p(x, y) \keq q(x, y)$.

Now suppose $x' \keq y'$. Then by assumption 2, there are $x, y \in \cF$, same length $p, q$ and $N(1, p) \neq N(1, q)$ where $x' = p(x, y)$ and $y' = q(x, y)$. Again by (2) in \cref{thm:template-equivalence}, we have $x \keq y$.
\end{proof}
%--------------------------------%

%--------------------------------%
\begin{corollary}\label{cor:sub-family}
Let $x, y \in \{0, 1\}^+$. Let $\cF \subset \{0, 1\}^+$ be a word class, and define
\begin{align*}
\cG_{\cF}^{xy} 
&:=
\{p(x, y) : p \in \cF\}.
\end{align*}
Then, if $\frac{N(0, x)}{N(1, x)} \neq \frac{N(0, y)}{N(1, y)}$, we have
\[
r'(\cG_{\cF}^{xy}) = r'(\cF).
\]
\end{corollary}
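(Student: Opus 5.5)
We prove \cref{cor:sub-family} by applying part (1) of \cref{thm:template-equivalence} to the fixed substitution pair $(x,y)$. The argument mirrors the proof of \cref{cor:template-family}, with the roles of templates and substitutions exchanged. The hypothesis $\frac{N(0,x)}{N(1,x)} \neq \frac{N(0,y)}{N(1,y)}$ is exactly what (1) requires. So for every $k \geq 1$ and all $p, q \in \{0,1\}^+$,
\[
p \keq q \iff p(x,y) \keq q(x,y).
\]
It therefore suffices to show the following for each $k$: there exist distinct $p, q \in \cF$ with $p \keq q$ if and only if there exist distinct $x', y' \in \cG_{\cF}^{xy}$ with $x' \keq y'$. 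Taking the minimum over $k$ on both sides then gives $r'(\cG_{\cF}^{xy}) = r'(\cF)$.

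\textbf{Forward direction.} Suppose $p \neq q$ in $\cF$ with $p \keq q$. By (1), $p(x,y) \keq q(x,y)$, and both words lie in $\cG_{\cF}^{xy}$ by definition. We still need $p(x,y) \neq q(x,y)$. Here we use that the ratio condition forces $x \neq y$; in fact it forces $N(1,x)/|x| \neq N(1,y)/|y|$. Since $p \keq q$ with $k \geq 1$, we have $|p| = |q|$ and $N(1,p) = N(1,q)$. Hence $p$ and $q$ use the same number of $x$-blocks and of $y$-blocks, so $|p(x,y)| = |q(x,y)|$.

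\textbf{Injectivity of substitution.} To get distinctness, we show that $p \mapsto p(x,y)$ is injective on words with fixed length and fixed number of $1$s. Let $i$ be the first index where $p$ and $q$ differ, say $p_i = 0$ and $q_i = 1$. After stripping the common prefix, we must rule out a word beginning with $x$ equalling a word beginning with $y$, given that both have the same composition of blocks. A cleaner route is to avoid this combinatorics entirely and use (1) in reverse. If $p(x,y) = q(x,y)$, then trivially $p(x,y) \keq[m] q(x,y)$ for every $m$, so (1) gives $p \keq[m] q$ for every $m$. Taking $m = |p|$ yields $p = q$. So distinctness is automatic. This is the only subtle step; I expect it to be the main point needing care, and the reverse application of (1) resolves it.

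\textbf{Converse direction.} Suppose $x' \neq y'$ in $\cG_{\cF}^{xy}$ with $x' \keq y'$. Write $x' = p(x,y)$ and $y' = q(x,y)$ with $p, q \in \cF$. These must be distinct, since $x' \neq y'$. By (1), $p \keq q$.

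Combining both directions, the sets of $k$ admitting a distinct $k$-equivalent pair coincide for $\cF$ and for $\cG_{\cF}^{xy}$, so the in-class thresholds agree.
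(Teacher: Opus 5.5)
Your proof is correct and follows essentially the paper's route: part (1) of \cref{thm:template-equivalence} transfers $k$-equivalence in both directions between $p,q\in\cF$ and $p(x,y),q(x,y)\in\cG_{\cF}^{xy}$. Your extra step, applying the reverse direction of (1) with $k=|p|$ to identical words, shows that $p\mapsto p(x,y)$ is injective and so supplies the distinctness check the paper leaves implicit; your preliminary remarks about lengths and densities of ones are not needed.
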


\begin{proof}
By construction, we have that for $p, q \in \{0, 1\}^+$,
\[
p, q \in \cF \iff p(x, y), q(x, y) \in \cG_{\cF}^{xy}.
\]
Since $\frac{N(0, x)}{N(1, x)} \neq \frac{N(0, y)}{N(1, y)}$, Theorem \ref{thm:template-equivalence} yields
\[
p \keq q \iff p(x, y) \keq q(x, y).
\]
Therefore, distinguishing any pair in $\cG_{\cF}^{xy}$ can be reduced to distinguishing a pair in $\cF$, so $r'(\cG_{\cF}^{xy}) = r'(\cF).$
\end{proof}
%--------------------------------%

To give an example, consider the class
\[
\cF_{\text{AP}} := \{w \in \{0, 1\}^+ : \supp(x) \text{ is an arithmetic progression}\}.
\]
Then for some fixed $L \geq 2$, by letting $x = 0, \, y = 1^L 0$, we obtain $\cG^{0, 1^L 0}_{\cF_{\text{AP}}}$, which can be interpreted as the class of binary strings that contain blocks of length $L$ whose starting positions form an arithmetic progression. By \cref{cor:sub-family}, we have
\[
r'(\cG^{0, 1^L 0}_{\cF_{\text{AP}}}) = r'(\cF_{\text{AP}}).
\]

We will now prove \cref{thm:template-equivalence}. From the proof, we can also see that assumption 1 in \cref{cor:template-family} can be slightly weakened by removing the requirement $N(1, p) \neq N(1, q)$.

%--------------------------------%
\begin{proof}[Proof of \cref{thm:template-equivalence} (1)]
For this proof, it will be easier to work with matrix $C_{xy} \in \NN^{2 \times 2}$ defined as
\[
C_{xy} := 
\begin{bmatrix}
N(0, x) & N(0, y) \\
N(1, x) & N(1, y)
\end{bmatrix}.
\]
Note that the condition $\frac{N(0, x)}{N(1, x)} \neq \frac{N(0, y)}{N(1, y)}$ is equivalent to $\det C_{xy} \neq 0$.

Suppose that $p \keq q$. For any $w \in \{0, 1\}^{\leq k}$, we have
\begin{equation}\label{eq:sub-count}
N(w, p(x, y))
=
\sum_{m = 1}^{|w|}\sum_{w = w^{(1)} \dots w^{(m)}} \sum_{s \in \{0, 1\}^m} \left(\prod_{j = 1}^{m} N(w^{(j)}, s_j(x, y)) \right)N(s, p),
\end{equation}
where $w = w^{(1)} \dots w^{(m)}$ is a partition of $w$ into $m$ nonempty consecutive blocks and $s_j$ is the $j$th bit in $s$. Similar to the counting argument for \eqref{eq:template-count} in \cref{lem:template-lem}, we can justify the above equation. Specifically, for some fixed $m$ and fixed partition $w = w^{(1)} \dots w^{(m)}$, we let a strictly increasing map $u:[m] \to [|p|]$ be a partition embedding. For this particular embedding, we can associate a ``pattern'' $s \in \{0, 1\}^m$, where $s_j = p_{u(j)}$ for $1 \leq j \leq m$. Then, the number of copies of $w$ that can appear in $p(x, y)$ under this particular partition embedding $u$ is $\prod_{j = 1}^{m} N(w^{(j)}, s_j(x, y))$. For each pattern $s \in \{0, 1\}^m$, $N(s, p)$ is exactly the number of partition embeddings $u$ satisfying $p_{u(j)} = s_j$ for all $j$. So, summing over all possible ways to partition $w$ and all the possible patterns yields the desired count for $N(w, p(x, y))$.

Since $p \keq q$, we have $N(s, p) = N(s, q)$ for all $|s| \leq k$, so $N(w, p(x, y)) = N(w, q(x, y))$. Therefore, $p(x, y) \keq q(x, y)$.

Now suppose that $p(x, y) \keq q(x, y)$ and $\det C_{xy} \neq 0$. For some $l \geq 1$, define the matrix $A_{x, y, l} \in \NN^{2^l \times 2^l}$, indexed by $w, s \in \{0, 1\}^l$ in some fixed ordering, by
\[
\left(A_{x, y, l}\right)_{w, s} := \prod_{j = 1}^{l} N(w_j, s_j(x, y)).
\]
More explicitly, $\left(A_{x, y, l}\right)_{w, s}$ is the number of copies of $w$ in $s(x,y)$ with each bit mapped to a different blow-up block.

For a word $z$, let $D_k(z)$ be the $k$-deck of $z$ in vector form, $D_k(z)=(N(w,z))_{w\in \{0,1\}^k}\in \mathbb N^{2^k}$. Then for $w\in\{0,1\}^l$, by pulling out the partition of singletons where each bit maps to a different block, we have
\[
N(w, p(x, y))
=
\sum_{s \in \{0, 1\}^l} \left(A_{x, y, l}\right)_{w, s} N(s, p) 
+
\sum_{m = 1}^{l - 1}\sum_{w = w^{(1)} \dots w^{(m)}} \sum_{s \in \{0, 1\}^m} \left(\prod_{j = 1}^{m} N(w^{(j)}, s_j(x, y)) \right)N(s, p)
\]
\[
=
(A_{x,y,l}D_l(p))_w
+
R_{x,y,l}(p)_w,
\]
where the first term is a matrix multiplication and the second term $R_{x,y,l}(p)_w$ above only involves $N(s, p)$ where $|s| < l$.
We can similarly write $N(w, q(x, y))$ with $p$ replaced with $q$. In short, for all $1 \leq l \leq k$, we have
\[
D_l(p(x, y)) = A_{x, y, l} D_{l}(p) + R_{x, y, l}(p),
\]
where $R_{x, y, l}(p)$ only depends on $D_j(p)$ with $j < l$. 

Now, we show by induction on $|w|$  that if $C_{x, y}$ is invertible, then $N(w, p) = N(w, q)$ for all $w \in \{0, 1\}^{\leq k}$, i.e. $D_l(p)=D_l(q)$ for all $l\leq k$. The base case is
\[
C_{x, y}D_1(p) = D_1(p(x, y)) = D_1(q(x, y)) = C_{x, y}D_1(q).
\]
Now suppose that $D_j(p) = D_j(q)$ for all $j < l \leq k$, so in particular $R_{x,y,l}(p)=R_{x,y,l}(q)$. We then have 
\begin{align*}
A_{x, y, l}D_l(p)
&=
D_l(p(x, y)) - R_{x, y, l}(p) \\
&=
D_l(q(x, y)) - R_{x, y, l}(q) \\
&=
A_{x, y, l}D_l(q).
\end{align*}
Observe that $A_{x, y, l} = C_{xy}\otimes \cdots\otimes C_{xy}=C^{\otimes l}_{x, y}$, where $\otimes$ is the Kronecker product, so $A_{x,y,l}$ is invertible for all $l\geq 1$. We conclude that $D_l(p) = D_l(q)$. So, we have $D_k(p) = D_k(q)$, implying that $p \keq q$. This finishes the proof of (1).
\end{proof}

\begin{proof}[Proof of \cref{thm:template-equivalence} (2)]
Suppose that $x \keq y$ and $|p| = |q|$. Let $\cP_{w, m}$ be the set of all possible partitions of $w$ into $m$ nonempty consecutive blocks. For $s \in \{0, 1\}^m$ and $\cP \in \cP_{w, m}$, define
\[
N'_{w, {\cP}} := \prod_{j = 1}^{m} N(w^{(j)}, s_j(x, y)).
\]
Since $x \keq y$, $N'_{w, \cP}$ is a constant that depends on $w$ and the partition $\cP$. So, we can rewrite \eqref{eq:sub-count} as
\begin{align*}
N(w, p(x, y))
&=
\sum_{m = 1}^{|w|}\sum_{\cP \in \cP_{w, m}} N'_{w, {\cP}} \sum_{s \in \{0, 1\}^m} N(s, p) \\
&=
\sum_{m = 1}^{|w|}\sum_{\cP \in \cP_{w, m}} N'_{w, {\cP}} \binom{|p|}{m}.
\end{align*}
We have a similar expression for $N(w, q(x, y))$ by replacing $|p|$ above with $|q|$. So, if $|p| = |q|$, we see that $N(w, p(x, y)) = N(w, q(x, y))$ for every $w \in \{0, 1\}^{\leq k}$, implying that $p(x, y) \keq q(x, y)$.

To prove the converse, we use a similar induction idea as above. Suppose that $p(x, y) \keq q(x, y)$, $|p| = |q|$, and $N(1, p) \neq N(1, q)$. Let $l \geq 1$, and split \eqref{eq:sub-count} into
\begin{align}
N(w, p(x, y))
&=
\sum_{s \in \{0, 1\}} N(w, s(x, y)) N(s, p)
+
\sum_{m = 2}^{l}\sum_{w = w^{(1)} \dots w^{(m)}} \sum_{s \in \{0, 1\}^m} \left(\prod_{j = 1}^{m} N(w^{(j)}, s_j(x, y)) \right)N(s, p) \notag \\
&=
\sum_{s \in \{0, 1\}} N(w, s(x, y)) N(s, p) + R_{x, y, l}(p). \label{eq:sub-induct}
\end{align}
Observe that $R_{x, y, l}(p)$ only contains partitioned blocks of $w$ with length strictly less than $l$. Let $p_0 = N(0, p)$ and $q_0 = N(0, q)$, and let $n = |p| = |q|$. Notice that the first sum in \eqref{eq:sub-induct} can be written as
\[
p_0 N(w, x) + (n - p_0) N(w, y).
\]

We show by induction on $|w|$ that $N(w, x) = N(w, y)$ for all $w \in \{0, 1\}^{\leq k}$. 

If $|w| = 1$, we have
\[
p_0 N(w, x) + (n - p_0) N(w, y)
=
q_0 N(w, x) + (n - q_0) N(w, y),
\]
which is equivalent to
\[
(p_0 - q_0)(N(w, x) - N(w, y)) = 0.
\]
Since $N(1, p) \neq N(1, q)$, we have $p_0 - q_0 \neq 0$, so $N(w, x) = N(w, y)$. Suppose that for some $l \leq k$, $N(w', x) = N(w', y)$ for all $|w'| < l$. Then, since $|p| = |q|$, we have
\begin{align*}
R_{x, y, l}(p) 
= 
\sum_{m = 2}^{l}\sum_{\cP \in \cP_{w, m}} N'_{w, {\cP}} \binom{|p|}{m}.
\end{align*}
The same argument as above yields $R_{x, y, l}(p) = R_{x, y, l}(q)$. So, we again have $(p_0 - q_0)(N(w, x) - N(w, y)) = 0$, which implies $N(w, x) = N(w, y)$ for $|w| = l$. Therefore, $x \keq y$, which concludes the proof.
\end{proof}

%--------------------------------%

\end{subsection}
%----------------------------------------------------------%

\end{section}
%----------------------------------------------------------%

%----------------------------------------------------------%
\begin{section}{Conclusion}

We obtained reconstruction bounds for several restricted classes of binary words and showed that these bounds depend only on the structural parameters of the words class rather than the word length. We also provide a generalization of the template lemma given by Dudík and Schulman that improves the upper bound on weak deck reconstruction and allows us to characterize when template substitutions preserve $k$-equivalence.

Several questions remain open. For the general deck reconstruction problem, the known bounds are $e^{\Omega(\sqrt{\log n})} \leq r(n) \leq O(\sqrt{n})$ \cite{Dudik2003-fz, Krasikov1997-rp}, and it is unclear whether the upper or lower bound is closer to the true asymptotic behavior of $r(n)$. Future research may also investigate other restricted classes of binary words and find bounds on their reconstruction thresholds based on their structural properties. Finally, we raise an algorithmic question: is there an efficient algorithm which takes as input the $k$-deck of a word, where $k$ is large enough that this uniquely determines the word, and outputs the word?

\subsection{AI use statement}
A discussion with ChatGPT 5.5 resulted in the matrix interpretation of the counting formula in the proof of Theorem \ref{thm:template-equivalence} (1), which enabled the authors to obtain the precise condition needed for the induction argument in that proof. AI was not used to help develop any other results or proofs in this paper.
\end{section}
%----------------------------------------------------------%

\printbibliography

@ARTICLE{Krasikov1997-rp,
  title    = "On a reconstruction problem for sequences",
  author   = "Krasikov, Ilia and Roditty, Yehuda",
  journal  = "Journal of Combinatorial Theory, Series A",
  volume   = 77,
  number   = 2,
  pages    = "344--348",
  month    = feb,
  year     = 1997,
  doi      = "10.1006/jcta.1997.2732",
  issn     = "0097-3165",
  url      = "https://www.sciencedirect.com/science/article/pii/S0097316597927321",
  language = "en"
}

@ARTICLE{Dudik2003-fz,
  title     = "Reconstruction from subsequences",
  author    = "Dudı́k, Miroslav and Schulman, Leonard J",
  journal   = "J. Comb. Theory Ser. A.",
  publisher = "Elsevier BV",
  volume    =  103,
  number    =  2,
  pages     = "337--348",
  month     =  aug,
  year      =  2003,
  language  = "en"
}

@ARTICLE{Kalashnik1973,
  title     = "The reconstruction of a word from fragments",
  author    = "Kalashnik, L. I.",
  journal   = "Numerical Mathematics and Computer Technology",
  publisher = "Akad. Nauk Ukrain. SSR Inst. Mat.",
  note      = "Preprint IV",
  pages     = "56--57",
  year      =  1973
}

@ARTICLE{Scott1997-mh,
  title     = "Reconstructing sequences",
  author    = "Scott, A D",
  journal   = "Discrete Math.",
  publisher = "Elsevier BV",
  volume    =  175,
  number    = "1-3",
  pages     = "231--238",
  month     =  oct,
  year      =  1997,
  language  = "en"
}

@ARTICLE{Kelly1957,
  title     = "A congruence theorem for trees",
  author    = "P. J. Kelly",
  journal   = "Pacific J. Math",
  publisher = "Wiley",
  volume    =  7,
  number    =  1,
  pages     = "961-968",
  month     =  jan,
  year      =  1957,
  language  = "en"
}

@phdthesis{Kelly1942,
    author = "P. J. Kelly",
    title = "On isometric transformations",
    school = "University of Wisconsin-Madison",
    year = 1942
}

@ARTICLE{Batu2004-pq,
  title     = "Reconstructing strings from random traces",
  author    = "Batu, Tuǧkan and Kannan, Sampath and Khanna, Sanjeev and
               McGregor, Andrew",
  journal = "Proceedings of the fifteenth annual ACM-SIAM symposium on
               Discrete algorithms",
  publisher = "Society for Industrial and Applied Mathematics",
  address   = "USA",
  pages     = "910--918",
  series    = "SODA '04",
  month     =  jan,
  year      =  2004,
  language  = "en"
}

@ARTICLE{Raykova2006-jl,
  title     = "Permutation Reconstruction from Minors",
  author    = "Raykova, Mariana",
  journal   = "Electron. J. Comb.",
  publisher = "The Electronic Journal of Combinatorics",
  volume    =  13,
  number    =  1,
  pages     = "R66",
  month     =  aug,
  year      =  2006,
  language  = "en"
}

@ARTICLE{Manvel1991-mh,
  title     = "Reconstruction of sequences",
  author    = "Manvel, B and Meyerowitz, A and Schwenk, A and Smith, K and
               Stockmeyer, P",
  journal   = "Discrete Math.",
  publisher = "Elsevier BV",
  volume    =  94,
  number    =  3,
  pages     = "209--219",
  month     =  nov,
  year      =  1991,
  language  = "en"
}

@INPROCEEDINGS{Gabrys2017-ju,
  title     = "The hybrid k-deck problem: Reconstructing sequences from short
               and long traces",
  author    = "Gabrys, Ryan and Milenkovic, Olgica",
  booktitle = "2017 IEEE International Symposium on Information Theory (ISIT)",
  publisher = "IEEE",
  pages     = "1306--1310",
  month     =  jun,
  year      =  2017,
  language  = "en"
}

@INPROCEEDINGS{Golm2022-jy,
  title     = "The Gapped k-Deck Problem",
  author    = "Golm, Rebecca and Nahvi, Mina and Gabrys, Ryan and Milenkovic,
               Olgica",
  booktitle = "2022 IEEE International Symposium on Information Theory (ISIT)",
  publisher = "IEEE",
  pages     = "49--54",
  month     =  jun,
  year      =  2022
}

@ARTICLE{Yehezkeally2023-jm,
  title     = "Generalized Unique Reconstruction From Substrings",
  author    = "Yehezkeally, Yonatan and Bar-Lev, Daniella and Marcovich, Sagi
               and Yaakobi, Eitan",
  journal   = "IEEE Trans. Inf. Theory",
  publisher = "Institute of Electrical and Electronics Engineers (IEEE)",
  volume    =  69,
  number    =  9,
  pages     = "5648--5659",
  month     =  sep,
  year      =  2023
}

@ARTICLE{Chase2020-if,
  title         = "New upper bounds for trace reconstruction",
  author        = "Chase, Zachary",
  journal       = "arXiv [math.PR]",
  month         =  sep,
  year          =  2020,
  archivePrefix = "arXiv",
  primaryClass  = "math.PR"
}

@ARTICLE{Ma2024-sk,
  title         = "Improvements on permutation reconstruction from minors",
  author        = "Ma, Yiming and Zhong, Wenjie and Zhang, Xiande",
  journal       = "arXiv [math.CO]",
  month         =  nov,
  year          =  2024,
  archivePrefix = "arXiv",
  primaryClass  = "math.CO"
}

%----------------------------------------------------------%
\appendix
\begin{section}{Appendix: Computations for \Cref{subsec:fixedweight}}\label{sec:appendix}
 This section provides the computations that lead to equations \eqref{eq:g2identity}, \eqref{eq:g3identity}, and\eqref{eq:delta0identity}. For the remainder of the section, we let $s \geq 5$ and 
\[x = 0^{g_0}10^{g_1} \dots 10^{g_s}, \quad y = 0^{g'_0}10^{g'_1} \dots 10^{g'_s}\]
be words with $s$ ones such that $x \keq{s} y $.  Let $\delta =g-g'$ and recall that equation \eqref{eq:gapidentity} gives 
 \[\delta_i = (-1)^i { s \choose i} \delta_0.\]
 For convenience, we denote $G_j = \sum_{i=0}^j g_i$. Then, 
 \[G'_j = G_j + \alpha_j \delta_0, \quad \alpha_j = \sum_{i = 0}^j (-1)^i {s \choose i} = (-1)^j {s -1 \choose j}. \]
\end{section}

%----------------------------------------------------------%
\begin{subsection}{Equation for \texorpdfstring{$g_2$}{g2}}
\begin{lemma} Under the above assumptions, 
    \[ g_2 = g_1 - \frac{s(s+1)}{4} \delta_0 . \]
\end{lemma}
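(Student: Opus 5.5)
The plan is to expand both sides of the identity $N(001^{s-2},x)=N(001^{s-2},y)$, using the formula already written down in the proof of \cref{thm:fixedweight}:
\[
N(001^{s-2},x)=\binom{s-1}{2}\binom{G_0}{2}+(s-2)\binom{G_1}{2}+\binom{G_2}{2}.
\]
The same formula holds for $y$ with $G_j$ replaced by $G'_j$. I will substitute $G'_j=G_j+\alpha_j\delta_0$ with $\alpha_0=1$, $\alpha_1=-(s-1)$ and $\alpha_2=\binom{s-1}{2}$, as recorded at the start of the appendix. Write the weights as $w_0=\binom{s-1}{2}$, $w_1=s-2$, $w_2=1$. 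For each $j$ I use the elementary expansion
\[
\binom{G+\alpha\delta_0}{2}-\binom{G}{2}=\alpha\delta_0 G+\tfrac12\alpha^2\delta_0^2-\tfrac12\alpha\delta_0 .
\]
Subtracting the two counts then gives three contributions, which must sum to zero:
\[
\delta_0\sum_j w_j\alpha_j G_j \;+\; \tfrac{\delta_0^2}{2}\sum_j w_j\alpha_j^2 \;-\; \tfrac{\delta_0}{2}\sum_j w_j\alpha_j .
\]

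The key step is to evaluate the three weighted sums, which is routine arithmetic.

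First, $\sum_j w_j\alpha_j=\binom{s-1}{2}-(s-1)(s-2)+\binom{s-1}{2}=0$, so the term linear in $\delta_0$ with no $G$ in it disappears.

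Second, the coefficients of the $G$-part are $\binom{s-1}{2},\,-(s-1)(s-2),\,\binom{s-1}{2}$. Hence
\[
\sum_j w_j\alpha_jG_j=\binom{s-1}{2}(G_0-2G_1+G_2)=\binom{s-1}{2}(g_2-g_1).
\]

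Third,
\[
\sum_j w_j\alpha_j^2=\binom{s-1}{2}\Bigl(1+2(s-1)+\binom{s-1}{2}\Bigr)=\binom{s-1}{2}\cdot\frac{s(s+1)}{2}.
\]

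Putting these together, the identity becomes
\[
\binom{s-1}{2}\,\delta_0\Bigl((g_2-g_1)+\frac{s(s+1)}{4}\delta_0\Bigr)=0.
\]

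To finish, note that in the proof of \cref{thm:fixedweight} this lemma is only invoked in the case $\delta_0\neq 0$. Also $\binom{s-1}{2}\neq 0$ because $s\geq 5$. Dividing by both factors yields $g_2=g_1-\frac{s(s+1)}{4}\delta_0$.

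The main obstacle is bookkeeping rather than ideas. The appendix writes $\delta=g-g'$ while the main proof writes $\delta=g'-g$, so I must fix one sign convention, namely $G'_j=G_j+\alpha_j\delta_0$ as stated in the appendix. I must also double-check the alternating-sum formula $\alpha_j=(-1)^j\binom{s-1}{j}$ against \eqref{eq:gapidentity}. The other place needing care is verifying the cancellation $\sum_j w_j\alpha_j=0$, since that cancellation is what makes the relation linear in $g_2-g_1$.
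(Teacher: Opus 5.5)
Your proposal is correct and matches the paper's proof step for step. It uses the same weights $\bigl(\binom{s-1}{2},\,s-2,\,1\bigr)$, the same expansion $\binom{z+t}{2}-\binom{z}{2}=tz+\frac{t^2-t}{2}$, the same three weighted sums, and the same final division by $\binom{s-1}{2}\delta_0$ under the standing assumption $\delta_0\neq 0$. You are also right that the appendix's ``$\delta=g-g'$'' conflicts with the main text. The convention $G'_j=G_j+\alpha_j\delta_0$ (that is, $\delta=g'-g$) is the one that gives the stated sign.
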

\begin{proof}
Let $q = { s -1 \choose 2}$ and let $(w_0,w_1, w_2) = (q,s-2,1)$, so we have
\[
N(001^{s-2},x)=w_0\binom{G_0}{2}+w_1\binom{G_1}2+w_2\binom{G_2}2.
\]
Since $(\alpha_0, \alpha_1, \alpha_2) = (1,1-s, q),$ we have the identities
\[\sum_{j = 0}^2 w_j \alpha_j = 0, \quad \sum_{j=0}^2 w_j \alpha_j G_j = q(G_0 -2 G_1 + G_2)  = q(g_2 - g_1) ,\]
\[\sum_{j =0}^2 w_j \alpha_j^2 = q + (s-2) (s-1)^2 + q^2 = \frac{qs(s+1)}{2}.\]
With this, and using ${z + t \choose 2}- {z \choose 2} = tz + (t^2 - t) /2$ in the two expansions of $N(001^{s-2}, \cdot )$, gives 
\begin{align*}
    0 & = \sum_{j = 0}^2 w_j \left[ {G_j + \alpha_j \delta_0 \choose 2} - {G_j \choose 2} \right] \\
    & = q\delta_0 (g_2 - g_1) + \frac{qs(s+1)}{4} \delta_0^2 = q\delta_0 \left(g_2 - g_1 + \frac{s(s+1)}{4} \delta_0\right).
\end{align*}
Since we assumed $\delta_0 \neq 0,$ we have
\[ g_2 = g_1 - \frac{s(s+1)}{4} \delta_0, \] as desired.
\end{proof}
\end{subsection}
%----------------------------------------------------------%

%----------------------------------------------------------%
\begin{subsection}{Equation for  \texorpdfstring{$g_3$}{g3}}
\label{subsec:g3identityappendix}
\begin{lemma} Under the above assumptions, 
    \[ g_3 = g_1 + \frac{s(s+1 ) (s-4)}{12} \delta_0. \]
\end{lemma}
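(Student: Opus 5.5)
The plan is to repeat the argument used for the $g_2$ identity, now with the word $001^{s-3}$. Write $w_j \defined \binom{s-j-1}{3-j}$ for $j=0,1,2,3$, so that the expansion quoted in \cref{subsec:fixedweight} reads
\[
N(001^{s-3},x)=\sum_{j=0}^3 w_j\binom{G_j}{2}.
\]
The same expansion holds for $y$ with $G_j$ replaced by $G'_j=G_j+\alpha_j\delta_0$. We then equate $N(001^{s-3},x)=N(001^{s-3},y)$ and use
\[
\binom{z+t}{2}-\binom{z}{2}=tz+\tfrac{t^2-t}{2}.
\]
This gives
\[
0=\delta_0\sum_j w_j\alpha_j G_j+\frac{\delta_0^2}{2}\sum_j w_j\alpha_j^2-\frac{\delta_0}{2}\sum_j w_j\alpha_j .
\]

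The key step is the observation that
\[
w_j\alpha_j=(-1)^j\binom{s-1}{j}\binom{s-1-j}{3-j}=(-1)^j\binom{s-1}{3}\binom{3}{j},
\]
by the subset-of-a-subset identity. This collapses all three sums.
\begin{itemize}
\item The first is $\sum_j w_j\alpha_j=\binom{s-1}{3}\sum_j(-1)^j\binom 3j=0$.
\item The second is $\sum_j w_j\alpha_jG_j=\binom{s-1}{3}(G_0-3G_1+3G_2-G_3)$. Expanding the $G_j$, this equals $\binom{s-1}{3}(-g_1+2g_2-g_3)$.
\item The third is $\sum_j w_j\alpha_j^2=\binom{s-1}{3}\sum_j\binom 3j\binom{s-1}{j}=\binom{s-1}{3}\binom{s+2}{3}$, by Vandermonde's identity (using $\binom{3}{j}=\binom{3}{3-j}$).
\end{itemize}

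Since $\delta_0\neq 0$ and $\binom{s-1}{3}\neq 0$ for $s\geq 5$, we may divide by $\delta_0\binom{s-1}{3}$. This leaves
\[
g_3=2g_2-g_1+\frac{s(s+1)(s+2)}{12}\delta_0 .
\]
Finally, we substitute the previous lemma, $g_2=g_1-\frac{s(s+1)}{4}\delta_0$. The coefficient of $\delta_0$ becomes $\frac{s(s+1)}{12}\bigl((s+2)-6\bigr)=\frac{s(s+1)(s-4)}{12}$, which is exactly the claimed identity.

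The main obstacle is only bookkeeping: confirming the weights $w_j$ in the stated expansion of $N(001^{s-3},x)$, and spotting the factorization $w_j\alpha_j=(-1)^j\binom{s-1}{3}\binom 3j$. Once that factorization is in hand, everything reduces to an alternating binomial sum and Vandermonde's identity.
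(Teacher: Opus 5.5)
Your proposal is correct and follows the paper's proof essentially line for line: the same weights $w_j=\binom{s-j-1}{3-j}$, the factorization $w_j\alpha_j=(-1)^j\binom{3}{j}\binom{s-1}{3}$, the reduction (after dividing by $\delta_0\binom{s-1}{3}$) to $0=-g_1+2g_2-g_3+\frac{\delta_0}{2}\binom{s+2}{3}$, and then substitution of the $g_2$ identity. Your write-up is slightly cleaner in a few places. You consistently use the word $001^{s-3}$ with $\binom{G_j}{2}$; the appendix display at this step writes $N(0001^{s-3},x)$ with $\binom{G_j}{3}$ but then computes with $\binom{\cdot}{2}$. You also justify $\sum_j\binom{3}{j}\binom{s-1}{j}=\binom{s+2}{3}$ by Vandermonde's identity, and you note explicitly that $\binom{s-1}{3}\neq 0$ for $s\geq 5$.
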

\begin{proof}
Now we let $w_j = {s - j -1 \choose 3 - j}$, so 
\[
N(0001^{s-3},x)= w_0\binom{G_0}{3}+w_1\binom{G_1}{3}+w_2\binom{G_2}{3}+w_3\binom{G_3}{3}.
\] 
Recalling that $G'_j = G_j + \alpha_j\delta_0$, we have
\[w_j \alpha_j = (-1)^j {s - j -1 \choose 3 - j} {s -1 \choose j} = (-1)^j \frac{(s-1)!}{j! (3-j)! (s-4)!} = (-1)^j{3 \choose j} {s -1 \choose 3 }.\]
Thus $(w_0 \alpha_0,w_1\alpha_1, w_2\alpha_2, w_3\alpha_3 ) = {s-1 \choose 3} (1,-3,3,-1)$ and 
\[\sum_{j=0}^3 w_j\alpha_j = 0.\]
Now, the equality of the two counts give together with ${z + t \choose 2} - {z \choose 2} = tz + \frac{t^2-t}{2}$ gives
\begin{align*}
    0 & = \sum_{j = 0}^3 w_j \left[ {G_j + \alpha_j \delta_0 \choose 2} - {G_j \choose 2}\right] \\
 & = \delta_0 \sum_{j =0}^3 w_j \alpha_j G_j + \frac{\delta_0^2}{2}\sum_{j=0}^3 w_j \alpha_j^2 - \frac{\delta_0}{2} \sum_{j=0}^3 w_j \alpha_j . 
\end{align*}
The last term vanishes, and dividing by ${s - 1 \choose 3} \delta_0$ leaves 
\[ 0 = (G_0 - 3G_1 + 3G_2 - G_3 ) + \frac{\delta_0}{2} (\alpha_0 - 3 \alpha_1 + 3\alpha_2 - \alpha_3).\]
We can simplify the two parenthesis separately. For the first, 
\begin{align*}
G_0 - 3G_1 + 3G_2 -G_3 & =-(G_1-G_0) + 2(G_2-G_1) -(G_3-G_2) \\
& = -g_1 + 2g_2 - g_3.
\end{align*}
For the second, direct substitution gives 
\begin{align*}
\alpha_0 - 3 \alpha_1 + 3\alpha_2 - \alpha_3 & = 1 + 3 {s-1 \choose 1} + 3 {s-1 \choose 2} +  1 {s-1 \choose 3} \\
& = {s + 2 \choose 3}.\end{align*}
Combining everything, we have 
\[0 =-g_1 + 2g_2 -g_3 + \frac{\delta_0}{2} {s + 2 \choose 3}.\]
Substituting $g_2 = g_1 - s(s+1) \delta_0 /4$ from \eqref{eq:g2identity} gives
\begin{align*}
g_3 & = g_1 - \frac{s(s+1)}{2} \delta_0 + \frac{s(s+1)(s+2) }{12} \delta_0 \\
      & = g_1 + \frac{s(s+1 ) (s-4)}{12} \delta_0,
\end{align*} as desired.
\end{proof}
\end{subsection}
%----------------------------------------------------------%

%----------------------------------------------------------%
\begin{subsection}{Equation for  \texorpdfstring{$\delta_0$}{delta0}}
\begin{lemma} Under the above assumptions, 
    \[{s(s-1)(s-2)(s-3)(s-4)(s+1)(s^3 - 9s^2 + 8s+6)}\delta_0^3 = 0.\]
\end{lemma}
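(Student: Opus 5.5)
The plan is to repeat the template of the two previous appendix lemmas, now at cubic order. Write
\[
N(0001^{s-3},x)=\sum_{j=0}^{3} w_j\binom{G_j}{3},\qquad w_j=\binom{s-j-1}{3-j},
\]
and similarly for $y$ with $G'_j=G_j+\alpha_j\delta_0$. Set the difference of the two counts to zero and expand with the Vandermonde identity
\[
\binom{z+t}{3}-\binom{z}{3}=t\binom{z}{2}+\binom{t}{2}z+\binom{t}{3}.
\]
This splits the equation $0=\sum_j w_j\bigl[\binom{G_j+\alpha_j\delta_0}{3}-\binom{G_j}{3}\bigr]$ into three sums, linear, quadratic and cubic in $\delta_0$:
\[
\delta_0\sum_j w_j\alpha_j\binom{G_j}{2},\qquad \sum_j w_j\binom{\alpha_j\delta_0}{2}G_j,\qquad \sum_j w_j\binom{\alpha_j\delta_0}{3}.
\]

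For the first sum I reuse the fact from the proof of \eqref{eq:g3identity} that $(w_j\alpha_j)_j=\binom{s-1}{3}(1,-3,3,-1)$. So it becomes $\binom{s-1}{3}\delta_0$ times the third difference of $\binom{G_j}{2}$. I then rewrite everything in terms of $g_0,g_1,\delta_0$ only. The relations $G_1=g_0+g_1$, $G_2=G_1+g_2$ and $G_3=G_2+g_3$ are combined with \eqref{eq:g2identity} and \eqref{eq:g3identity}, which give $g_2$ and $g_3$ as $g_1$ plus explicit multiples of $\delta_0$. The second and third sums are explicit once $\alpha_j=(-1)^j\binom{s-1}{j}$ and $w_j$ are substituted. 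They give terms of the form $\delta_0^2\cdot(\text{linear in }G_j)$ and $\delta_0^3\cdot(\text{polynomial in }s)$, plus lower-order pieces in $\delta_0$ coming from the $-t$ parts of $\binom{t}{2}$ and $\binom{t}{3}$.

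The key step is to check that all terms involving $g_0$ and $g_1$ cancel. Two identities should drive this. First, a third difference of a quadratic in $j$ vanishes. Second, the pieces linear in $\delta_0$ cancel exactly as in the proof of \eqref{eq:g3identity}, because $\sum_j w_j\alpha_j=0$ and the identity $-g_1+2g_2-g_3+\tfrac{\delta_0}{2}\binom{s+2}{3}=0$ holds. If some residual $g_1\delta_0^2$ term survives, I would eliminate it by reusing the linear relation from the $g_3$ computation, or by also pairing with the quadratic count $N(001^{s-2})$ already used. After cancellation the equation should read $c(s)\,\delta_0^3=0$ for a rational function $c(s)$, possibly with an overall factor $\delta_0$ divided out as before.

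The last step is to clear denominators and factor $c(s)$ to get the stated product $s(s-1)(s-2)(s-3)(s-4)(s+1)(s^3-9s^2+8s+6)$. The factor $\binom{s-1}{3}$ supplies $(s-1)(s-2)(s-3)$; the factors $s(s+1)$ and $(s-4)$ come from \eqref{eq:g2identity} and \eqref{eq:g3identity}. The main obstacle is the bookkeeping in the cancellation step, i.e. confirming that no $g$-dependence survives. I would organize that computation by treating the expression as a polynomial in $(g_0,g_1,\delta_0)$ and checking each coefficient separately, using computer algebra to confirm the final cubic factor.
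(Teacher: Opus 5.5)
Your plan is correct and uses the same count $N(0001^{s-3},\cdot)$ and the same substitutions as the paper; if carried out, it gives exactly the paper's identity. The difference is how the algebra is organized.

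\textbf{The paper's route.} It first notes that \eqref{eq:gapidentity}, \eqref{eq:g2identity} and \eqref{eq:g3identity} make the averaged gaps $g_i+\delta_i/2$ all equal to $g_1-s\delta_0/2$ for $i=1,2,3$. Hence $L_j:=G_j+\alpha_j\delta_0/2$ is linear in $j$. It then uses the centered identity $\binom{z+t}{3}-\binom{z}{3}=\frac t2\bigl(z-1+\frac t2\bigr)^2+\frac{t^3}{24}-\frac t6$. The first term becomes a third difference of a quadratic in $j$, and the last is killed by $\sum_j w_j\alpha_j=0$. All $g$-dependence disappears at once, and only $\frac{\delta_0^3}{24}\sum_j w_j\alpha_j^3$ survives.

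\textbf{Your Vandermonde route.} Here the cancellation is not termwise. Write $G_j=L_j-\alpha_j\delta_0/2$ and $c=(1,-3,3,-1)$, so $w_j\alpha_j=\binom{s-1}{3}c_j$.
\begin{itemize}
\item Your first and second sums contain the opposite terms $\mp\frac12\binom{s-1}{3}\delta_0^2\sum_j c_j\alpha_jL_j$. These are genuinely $g$-dependent and cancel only against each other.
\item The $\delta_0^2\sum_j c_j\alpha_j=\delta_0^2\binom{s+2}{3}$ pieces cancel across all three sums. Your relation $-g_1+2g_2-g_3+\frac{\delta_0}{2}\binom{s+2}{3}=0$ enters here, in the form $\sum_j c_jG_j=-\frac{\delta_0}{2}\sum_j c_j\alpha_j$.
\item What remains is $\binom{s-1}{3}\delta_0^3\bigl(\frac18-\frac14+\frac16\bigr)\sum_j c_j\alpha_j^2$, which matches the paper.
\end{itemize}
So no residual $g_1\delta_0^2$ term survives, and your fallback is unnecessary. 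It would also be vacuous, since those relations are already substituted.

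\textbf{A correction to your heuristic.} The factor $s(s+1)(s-4)$ is not inherited from the coefficients in \eqref{eq:g2identity} and \eqref{eq:g3identity}. It divides
$36\sum_j c_j\alpha_j^2=36-108(s-1)^2+27(s-1)^2(s-2)^2-(s-1)^2(s-2)^2(s-3)^2=-s(s+1)(s-4)(s^3-9s^2+8s+6)$,
a quantity that depends only on the $\alpha_j$.

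\textbf{What each buys.} Your route is mechanical and can be checked by computer algebra without spotting any trick. The paper's linearization of $G_j+\alpha_j\delta_0/2$ gives a short hand computation that explains why the gaps drop out.
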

\begin{proof}
Recall $G_j = \sum_{i=0}^j g_i$ and $\alpha_j \delta_0= \sum_{i = 0}^j \delta_i $. Equations \eqref{eq:g2identity} and \eqref{eq:g3identity} give $g_i + \delta_i / 2 = g_1 - s\delta_0 /2$ for $i = 1, 2,3.$ Adding these averaged gaps give 
\[G_j + \frac{\alpha_j \delta_0}{2} = g_0 + \frac{\delta_0}{2} + j \left( g_1 - \frac{s\delta_0}{2}\right),\]
for $0 \leq j \leq 3$. Let $w_j = {s - j -1 \choose 3 -j}$ as in  \Cref{subsec:g3identityappendix}. Recall, $(w_0 \alpha_0,w_1\alpha_1, w_2\alpha_2, w_3\alpha_3 ) = {s-1 \choose 3} (1,-3,3,-1)$. These coefficients annihilate constants, linear terms, and squares as 
\[1-3 + 3 -1 = 0 , \quad 1(0) -3(1) +3(2) -1(3) = 0 , \quad  1(0) -3(1) +3(4) -1(9) = 0. \]
Since $G_j + \alpha_j\delta_0 /2$ is linear in $j$, the following sums vanish
\[\sum_{j = 0}^3 w_j \alpha_j = 0 , \quad \sum_{j=0}^3 w_j \alpha_j \left(G_j - 1 + \frac{\alpha_j\delta_0}{2}\right)=0.\]
Using this and the identity 
\[{z + t \choose 3} - {z \choose 3} = \frac{t}{2}\left( z -1 + \frac{t}{2} \right)^2 + \frac{t^3}{24}- \frac{t}{6},\]
in our equality of counts gives 
\begin{align*}
    0 & = \sum_{j = 0}^3 w_j \left[ {G_j + \alpha_j \delta_0 \choose 3} - {G_j \choose 3}\right] = \frac{\delta_0^3}{24} \sum_{j=0}^3 w_j \alpha_j^3 \\
 & =  \frac{{s-1 \choose 3} \delta_0^3}{864} [36 - 108(s-1)^2 + 27(s-1)^2(s-2)^2 - (s-1)^2(s-2)^2(s-3)^2]. 
\end{align*}
Expanding ${s -1 } \choose 3$, factoring, and multiplying by $-5184$ gives
\[{s(s-1)(s-2)(s-3)(s-4)(s+1)(s^3 - 9s^2 + 8s+6)}\delta_0^3 = 0,\] as desired.
\end{proof}
\end{subsection}
%----------------------------------------------------------%

%----------------------------------------------------------%
\end{document}